\documentclass[runningheads, envcountsame, a4paper]{llncs}

\usepackage{amssymb,amsmath}
\usepackage{url}
\usepackage[colorlinks=true,citecolor=black,linkcolor=black,urlcolor=blue]{hyperref}
\usepackage{cleveref}

\crefname{theorem}{Theorem}{Theorems}
\crefname{corollary}{Corollary}{Corollaries}
\crefname{proposition}{Proposition}{Propositions}
\crefname{claim}{claim}{claims}
\Crefname{claim}{Claim}{Claims}
\crefname{conjecture}{Conjecture}{Conjectures}
\crefname{remark}{Remark}{Remarks}

\def\abs#1{\left| #1 \right|}
\def\paren#1{\left( #1 \right)}
\def\acc#1{\left\{ #1 \right\}}
\def\floor#1{\left\lfloor #1 \right\rfloor}

\def\cro#1{\left[ #1 \right]}

\renewcommand{\le}{\leqslant}
\renewcommand{\ge}{\geqslant}

\begin{document}
\title{Nice formulas, $xyx$-formulas, and palindrome patterns\thanks{This work was partially supported by the ANR project CoCoGro (ANR-16-CE40-0005)}}

\titlerunning{Nice formulas and palindrome patterns}  
\toctitle{Nice formulas and palindrome patterns}

\author{Pascal Ochem\inst{1} \and Matthieu Rosenfeld\inst{2}}

\authorrunning{Pascal Ochem and Matthieu Rosenfeld} 

\tocauthor{Pascal~Ochem and Matthieu~Rosenfeld}
\institute{LIRMM, CNRS, Universit\'e de Montpellier, France.\\
\email{ochem@lirmm.fr}
\and LIP, ENS de Lyon, CNRS, UCBL, Universit\'e de Lyon, France.\\
\email{matthieu.rosenfeld@ens-lyon.fr}}

\maketitle
\setcounter{footnote}{0}
\begin{abstract}
We characterize the formulas that are avoided by every $\alpha$-free word for some $\alpha>1$. We study the avoidability index of formulas whose fragments are of the form $XYX$.
The largest avoidability index of an avoidable palindrome pattern is known to be at least $4$ and at most $16$. We make progress toward the conjecture that every avoidable palindrome pattern is $4$-avoidable.

\end{abstract}

\section{Introduction}\label{sec:intro}
A \emph{pattern} $p$ is a non-empty finite word over an alphabet $\Delta=\acc{A,B,C,\dots}$ of capital letters called \emph{variables}.
An \emph{occurrence} of $p$ in a word $w$ is a non-erasing morphism $h:\Delta^*\to\Sigma^*$ such that $h(p)$ is a factor of $w$ (a morphism is \emph{non-erasing} if the image of every letter is non-empty).
The \emph{avoidability index} $\lambda(p)$ of a pattern $p$ is the size of the smallest alphabet $\Sigma$ such that there exists an infinite word over $\Sigma$ containing no occurrence of $p$.
Since there is no risk of confusion, $\lambda(p)$ will be simply called the index of $p$.

A variable that appears only once in a pattern is said to be \emph{isolated}.
Following Cassaigne~\cite{Cassaigne1994}, we associate a pattern $p$ with the \emph{formula} $f$ obtained by replacing every isolated variable in $p$ by a dot.
The factors between the dots are called \emph{fragments}.

An \emph{occurrence} of a formula $f$ in a word $w$ is a non-erasing morphism $h:\Delta^*\to\Sigma^*$ such that the $h$-image of every fragment of $f$ is a factor of $w$.
As for patterns, the index $\lambda(f)$ of a formula $f$ is the size of the smallest alphabet allowing the existence of an infinite word containing no occurrence of $f$.
Clearly, if a formula $f$ is associated with a pattern $p$, every word avoiding $f$ also avoids $p$, so $\lambda(p)\le\lambda(f)$.
Recall that an infinite word is \emph{recurrent} if every finite factor appears infinitely many times and that any infinite factorial language contains a recurrent word~\cite[Proposition 5.1.13]{Fogg}.
If there exists an infinite word over $\Sigma$ avoiding $p$, then there exists an infinite recurrent word over $\Sigma$ avoiding $p$.
This recurrent word also avoids $f$, so that $\lambda(p)=\lambda(f)$. Without loss of generality, a formula is such that no variable is isolated and no fragment is a factor of another fragment.

Let us define the types of formulas we consider in this paper.
A pattern is \emph{doubled} if it contains every variable at least twice. Thus it is a formula with only one pattern.
A formula $f$ is \emph{nice} if for every variable $X$ of $f$, there exists a fragment of $f$ that contains $X$ at least twice. Notice that a doubled pattern is a nice pattern.
A formula is an \emph{$xyx$-formula} if every fragment is of the form $XYX$, i.e., the fragment has length $3$ and the first and third variable are the same.
A formula is \emph{hybrid} if every fragment has length 2 or is of the form $XYX$. Thus, an $xyx$-formula is a hybrid formula.

In \Cref{sec:nice}, we consider the avoidance of nice formulas.
In \Cref{sec:charac}, we find some formulas $f$ such that every recurrent word avoiding $f$ over $\Sigma_{\lambda(f)}$ is equivalent to a well-known morphic word.
In \Cref{sec:xyx},  we consider the avoidance of $xyx$-formulas and hybrid formulas.
In \Cref{sec:palin},  we consider the avoidance of patterns that are palindromes.

\section{Preliminaries}
The Zimin function associates to a pattern $p$ the pattern $Z(p)=pXp$ where $X$ is a variable that is not contained in $p$.
Notice that a recurrent word avoids $Z(p)$ if and only if it avoids $p$. In particular, $\lambda(p)=\lambda(Z(p))$.

We say that a formula $f$ is \emph{divisible} by a formula $f'$ if $f$ does not avoid $f'$, that is, there is a non-erasing morphism $h$
such that the image of any fragment of $f'$ by $h$ is a factor of a fragment of $f$. If $f$ is divisible by $f'$, then every word avoiding $f'$ also avoids $f$ and $\lambda(f')\ge \lambda(f)$.
Let $\Sigma_k=\acc{0,1,\ldots,k-1}$ denote the $k$-letter alphabet. We denote by $\Sigma_k^n$ the $k^n$ words of length $n$ over $\Sigma_k$.

The operation of \emph{splitting} a formula $f$ on a fragment $\phi$ consists in replacing $\phi$ by two fragments, namely the prefix and the suffix of length $|\phi|-1$ of $\phi$.
A formula $f$ is \emph{minimally avoidable} if splitting any fragment of $f$ gives an unavoidable formula. The set of every minimally avoidable formula with at most $n$ variables is called the $n$-avoidance basis.

The \emph{adjacency graph} $AG(f)$ of the formula $f$ is the bipartite graph such that
\begin{itemize}
\item for every variable $X$ of $f$, $AG(f)$ contains the two vertices $X_L$ and $X_R$,
\item for every (possibly equal) variables $X$ and $Y$, there is an edge between $X_L$ and $Y_R$ if and only if $XY$ is a factor of $f$.
\end{itemize}
We say that a set $S$ of variables of $f$ is \emph{free} if for all $X,Y\in S$, $X_L$ and $Y_R$ are in distinct connected components of $AG(f)$.
A formula $f$ is said to reduce to $f'$ if it is obtained by deleting all the variables of a free set from $f$, discarding any empty word fragment.
A formula is \emph{reducible} if there is a sequence of reductions to the empty formula. Finally, a \emph{locked} formula is a formula having no free set.
\begin{theorem}[\cite{BEM79}]\label{redequivunav}
A formula is unavoidable if and only if it is reducible.
\end{theorem}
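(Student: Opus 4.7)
I would prove the two implications separately by induction on the number of variables of $f$ (equivalently, on the length of a reduction sequence).

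For \emph{reducible implies unavoidable}, the base case is the empty formula, which vacuously occurs in every word. For the inductive step, suppose $f$ reduces to $f'$ by deletion of a free set $S$, so that by the inductive hypothesis $f'$ is unavoidable, i.e.\ every long enough word over every alphabet contains an occurrence of $f'$. To lift this to $f$, I would use a blocking trick: given an alphabet $\Sigma$ and a sufficiently long word $w$ over $\Sigma$, view $w$ as a word $w^*$ over the enlarged alphabet $\Sigma^m$ by grouping $m$ consecutive letters at a time, with $m$ chosen according to the fragment lengths of $f$. The inductive hypothesis applied to $w^*$ yields an occurrence of $f'$, which induces a morphism $h$ on the variables of $f'$ whose images are concatenations of length-$m$ blocks of $w$. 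The crux is then to extend $h$ to the variables in $S$: because the freeness of $S$ says that $X_L$ and $Y_R$ lie in distinct connected components of $AG(f)$ for every $X, Y \in S$, the adjacency constraints placed on distinct $S$-variables decouple; consequently, each $h(X)$ for $X \in S$ can be chosen independently as an appropriate factor of $w$ inside a suitable block, so that every fragment of $f$ becomes a factor of $w$.

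For \emph{not reducible implies avoidable}, I would iterate reductions from $f$ until reaching a nonempty locked formula $g$. Since $g$ has no free set, for every variable $X$ the vertices $X_L$ and $X_R$ share a connected component of $AG(g)$, and this cyclic structure can be exploited to construct, over a sufficiently large alphabet, an infinite word avoiding $g$: the connectivity in $AG(g)$ forces any candidate occurrence of $g$ to satisfy multiple conflicting adjacency constraints simultaneously, which a suitably structured (e.g.\ morphic) word over a large alphabet will fail to satisfy. I would then lift an avoiding word for $g$ to an avoiding word for $f$ by reverse induction along the reduction sequence, using that each reduction step has a controlled effect on fragment occurrences so that reinserting the removed $S$-variables cannot create new occurrences.

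The principal technical obstacle is the extension step in the forward direction: given an occurrence of $f'$ in the blocked word, one must simultaneously pick nonempty images for all $S$-variables so that every original fragment of $f$---which may weave through several sub-fragments of $f'$, interleaved with $S$-variables---is realized as a contiguous factor of $w$. This is exactly where the freeness hypothesis is indispensable, and quantifying the argument typically requires a pigeonhole or Ramsey-type estimate on the number and size of the blocks in order to guarantee enough room for every required insertion.
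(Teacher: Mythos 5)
The paper does not prove this statement at all: it is quoted from Bean--Ehrenfeucht--McNulty \cite{BEM79} as a known black box, so there is no internal proof to compare against. Your architecture does match the classical one (induction along the reduction sequence for ``reducible $\Rightarrow$ unavoidable''; maximal reduction to a nonempty locked kernel plus back-propagation for the converse), but as written the sketch leaves every genuinely hard step open. In the forward direction, the difficulty is not merely ``enough room in the blocks'': a variable $X\in S$ typically occurs at several positions of $f$, flanked by different neighbours, and all of these occurrences must receive the \emph{same} nonempty image $h(X)$. Saying the constraints ``decouple'' does not resolve this; the classical resolution uses the structure that freeness actually gives -- namely that the components of $AG(f)$ containing some $X_L$ with $X\in S$ are disjoint from those containing some $Y_R$ with $Y\in S$, which yields a consistent left/right convention for where each inserted image begins and ends (alternatively one routes the whole direction through the Zimin words $Z_n$ and proves that reducible formulas divide $Z_n$). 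Your sketch names the obstacle but does not supply the idea that overcomes it.

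The converse direction has two unflagged gaps. First, the lifting step is not the near-triviality you describe: if $f$ reduces to $g=\delta_S(f)$, an occurrence of $f$ in a word $u$ does \emph{not} restrict to an occurrence of $g$ in $u$, because deleting the $S$-variables from a fragment destroys its factorhood; so a word avoiding $g$ need not avoid $f$, and one must build a new word (classically, by inserting a fresh marker letter at positions governed by the same free-set two-colouring), which is exactly where freeness is used in this direction -- your sketch does not use it at all here. Second, the avoidability of locked formulas is a substantial theorem in its own right (it appears in this paper as \Cref{thm:locked}, cited to Baker--McNulty--Taylor, who show $b_4$ avoids every locked formula); the remark that connectivity ``forces conflicting adjacency constraints'' is not an argument. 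As it stands the proposal is a correct table of contents for the Bean--Ehrenfeucht--McNulty proof rather than a proof.
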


Let us define here the following well-known pure morphic words. To specify a morphism $m:\Sigma_s\to\Sigma_e$, we use the notation $m=m(\texttt{0})/m(\texttt{1})/\cdots/m(s-1)$.
Assuming a morphism $m:\Sigma_s\to\Sigma_s$ is such that $m(\texttt{0})$ starts with \texttt{0}, the \emph{fixed point} of $m$ is the right infinite word $m^\omega(\texttt{0})$.

\begin{itemize}
\item$b_2$ is the fixed point of $\texttt{01}/\texttt{10}$.
\item$b_3$ is the fixed point of $\texttt{012}/\texttt{02}/\texttt{1}$.
\item$b_4$ is the fixed point of $\texttt{01}/\texttt{03}/\texttt{21}/\texttt{23}$.
\item$b_5$ is the fixed point of $\texttt{01}/\texttt{23}/\texttt{4}/\texttt{21}/\texttt{0}$
\end{itemize}
We also consider the morphic words
$v_3=M_1(b_5)$ and $w_3=M_2(b_5)$, where $M_1=\texttt{012}/\texttt{1}/\texttt{02}/\texttt{12}/\varepsilon$ and $M_2=\texttt{02}/\texttt{1}/\texttt{0}/\texttt{12}/\varepsilon$.
The languages of each of these words have been studied in the literature.
Let us first recall the following characterization of $b_3$, $v_3$, and $w_3$.
We say that two infinite words are \emph{equivalent} if they have the same set of factors.

\begin{theorem}[\cite{BO15}]\label{w_3}
\begin{itemize}
\item Every ternary square-free recurrent word avoiding \texttt{010} and \texttt{212} is equivalent to $b_3$.
\item Every ternary square-free recurrent word avoiding \texttt{010} and \texttt{020} is equivalent to $v_3$.
\item Every ternary square-free recurrent word avoiding \texttt{121} and \texttt{212} is equivalent to $w_3$.
\end{itemize}
\end{theorem}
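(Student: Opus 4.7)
The plan is to prove each of the three items by the same two-step strategy: first, check that the specified morphic word satisfies the claimed properties; second, show uniqueness of such a word up to the factor-set equivalence.

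\emph{Step 1 (the morphic word works).} For each of $b_3$, $v_3$, $w_3$, I verify by direct computation that the generating morphism is primitive (so its fixed point is uniformly recurrent, hence recurrent), that the fixed point is ternary square-free, and that it avoids the two listed length-three factors. Square-freeness is handled by the standard iterative test: if every square-free word of length $k$ (for a suitable small $k$ depending on the morphism's expansion rate) has a square-free image, then the fixed point is square-free. Avoidance of the forbidden factors reduces to inspecting the images of all letters and length-two factors.

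\emph{Step 2 (uniqueness).} Let $w$ be a ternary square-free recurrent word avoiding, for concreteness, \texttt{010} and \texttt{212}. My plan is a desubstitution argument. In $w$ the letter \texttt{1} can never be adjacent to another \texttt{1} (square) nor surrounded as \texttt{010} or \texttt{212}, and the letter \texttt{2} likewise has constrained neighbors; one checks that these restrictions force every factor of $w$ to admit a unique factorization into the three blocks $\texttt{012}$, $\texttt{02}$, $\texttt{1}$, which are exactly the $m$-images of the three letters for $m=\texttt{012}/\texttt{02}/\texttt{1}$. Writing $w=m(w')$ for the resulting $w'\in\Sigma_3^\omega$, one then shows that $w'$ is itself a ternary square-free recurrent word avoiding \texttt{010} and \texttt{212}, so the same recognition step applies recursively. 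Since the morphism is prolongable and has a unique fixed point up to shift, this forces $w$ and $b_3$ to have the same set of factors. The analogous argument with the morphism defining $v_3$ (respectively $w_3$) settles the second (respectively third) item.

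\emph{Main obstacle.} The crux is proving that the block decomposition is uniquely determined at every position of $w$. This requires a careful case analysis of all locally admissible configurations near a candidate block boundary, using square-freeness together with the two forbidden length-three factors to rule out alternative alignments. A cleaner but more computational route is to enumerate, by breadth-first search, all factors of length up to some explicit bound $L$ that are square-free and avoid the two forbidden factors; the resulting list turns out to equal the length-$L$ factor set of $b_3$. Combined with recurrence and the fact that the orbit closure of $b_3$ is a minimal subshift, this equality at a large enough length forces equality of the full languages, which is the desired equivalence.
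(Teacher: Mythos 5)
This theorem is not proved in the paper at all: it is imported from Badkobeh and Ochem~\cite{BO15}, so there is no in-paper proof to measure your argument against. Judged on its own, your main route (verify the morphic word, then desubstitute an arbitrary word with the stated properties) is the standard strategy for such characterizations and is essentially how results of this type are established in the cited reference, at least for $b_3$: the recognizability of the blocks $\texttt{012}$, $\texttt{02}$, $\texttt{1}$ inside a square-free word avoiding $\texttt{010}$ and $\texttt{212}$ is exactly the crux, and your plan correctly identifies it.

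Two genuine gaps remain. First, your ``cleaner computational route'' does not work as stated: knowing that the set of admissible factors of one fixed length $L$ equals the length-$L$ factor set of $b_3$ does not propagate to longer lengths, since a word of length $L+1$ outside the language of $b_3$ could still have both of its length-$L$ factors inside it. Minimality of the subshift of $b_3$ only helps once you already know that \emph{every} factor of $w$ is a factor of $b_3$; to get that from a finite check you still need the desubstitution recursion (or an equivalent synchronization lemma), so the computational route is not an alternative to Step~2 but depends on it. Second, the phrase ``the analogous argument with the morphism defining $v_3$ (respectively $w_3$)'' hides a real difficulty: as defined in this paper, $v_3=M_1(b_5)$ and $w_3=M_2(b_5)$ are not fixed points of ternary morphisms but images of the five-letter word $b_5$ under the \emph{erasing} morphisms $M_1$ and $M_2$ (which send $\texttt{4}$ to $\varepsilon$). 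There is therefore no self-map to desubstitute by, the block decomposition involves an empty block that cannot be read off from the image, and the recursion has to pass through $b_5$ and its own characterization. This requires a genuinely different (and more delicate) uniqueness argument than the one you sketch for $b_3$.
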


Interestingly, these three words can be characterized in terms of a forbidden distance between consecutive occurrences of one letter.

\begin{theorem}\label{distance}{\ }
\begin{itemize}
\item Every ternary square-free recurrent word such that the distance between consecutive occurrences of \texttt{1} is not $3$ is equivalent to $b_3$.
\item Every ternary square-free recurrent word such that the distance between consecutive occurrences of \texttt{0} is not $2$ is equivalent to $v_3$.
\item Every ternary square-free recurrent word such that the distance between consecutive occurrences of \texttt{0} is not $4$ is equivalent to $w_3$.
\end{itemize}
\end{theorem}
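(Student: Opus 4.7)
\medskip
The plan is to reduce each of the three items to \Cref{w_3} by showing that, within the class of ternary square-free recurrent words, the stated distance condition is equivalent to the avoidance of the two length-$3$ factors named in the corresponding item of \Cref{w_3}. Once this equivalence is established, \Cref{w_3} immediately yields equivalence to $b_3$, $v_3$, or $w_3$.

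Item~2 is essentially a definition check: a consecutive pair of \texttt{0}'s at distance $2$ is exactly a factor \texttt{0}$x$\texttt{0} with $x \in \{\texttt{1},\texttt{2}\}$, that is, \texttt{010} or \texttt{020}. Item~3 is nearly as short: a consecutive pair of \texttt{0}'s at distance $4$ corresponds to a factor \texttt{0}$xyz$\texttt{0} with $xyz \in \{\texttt{1},\texttt{2}\}^3$, and square-freeness restricts $xyz$ to \texttt{121} or \texttt{212}. Conversely, any occurrence of \texttt{121} or \texttt{212} in a square-free word is forced, by the usual one-letter extension argument, to sit between two \texttt{0}'s (the other extensions on either side being squares), producing a distance-$4$ pattern. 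So for items~2 and~3 the equivalence with the forbidden-factor description is immediate.

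Item~1 is the heart of the proof. Given $w$ ternary, square-free, recurrent, with no distance $3$ between consecutive \texttt{1}'s, I plan to show that neither \texttt{010} nor \texttt{212} can occur. The argument is a short bounded extension: square-freeness forces \texttt{010} to be both preceded and followed by \texttt{2}, giving \texttt{20102}; the distance hypothesis then excludes \texttt{1} on either side (it would create consecutive \texttt{1}'s at distance $3$) while square-freeness excludes \texttt{2}, so the forced further extension is \texttt{0201020}; and finally each of the three possible one-letter right extensions of \texttt{0201020} creates a square, contradicting recurrence. The analogous chain forces \texttt{212} to extend to \texttt{2021202}, which admits no one-letter right extension without creating a square.

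The main obstacle is organizing the case analysis in item~1 so that the interaction between square-freeness (blocking some extensions) and the distance hypothesis (blocking others) is tracked cleanly at each step, and so that both \texttt{010} and \texttt{212} reach their contradictions by this same two-step forced-extension template. Because every contradiction arises within a window of at most eight letters, the analysis could equally well be verified by a short finite enumeration, and no deeper structural argument on the defining morphism of $b_3$ is required.
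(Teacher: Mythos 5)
Your proposal is correct, and since the paper states this theorem without any proof, the reduction to \Cref{w_3} via the correspondence between distance conditions and the forbidden factors \texttt{010}/\texttt{020}, \texttt{121}/\texttt{212}, and (for item~1) the forced-extension chains $\texttt{010}\to\texttt{20102}\to\texttt{0201020}$ and $\texttt{212}\to\texttt{02120}\to\texttt{2021202}$ is exactly the intended argument; I checked each forced step and each terminal square ($\texttt{00}$, $(\texttt{0201})^2$, $(\texttt{02})^2$, etc.) and they all hold. One small wording fix: the final squares contradict square-freeness, not recurrence --- recurrence is what you need (and correctly use) to guarantee that an occurrence with enough left context exists.
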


The word $b_4$ is also known to avoid large families of formulas.
\begin{theorem}[\cite{BNT89}]\label{thm:locked}
Every locked formula is avoided by $b_4$.
\end{theorem}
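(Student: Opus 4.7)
My plan is infinite descent: suppose for contradiction that $b_4$ admits an occurrence $h:\Delta^*\to\Sigma_4^*$ of a locked formula $f$ with $\sum_{X\in\Delta}|h(X)|$ minimal, and produce a strictly smaller occurrence. The central structural fact about $b_4$ is the parity alignment inherited from the morphism $m:0\mapsto 01,\,1\mapsto 03,\,2\mapsto 21,\,3\mapsto 23$. Since the image of every letter lies in $\{0,2\}\{1,3\}$, an induction on $b_4=m^\omega(0)$ yields that letters at even positions of $b_4$ lie in $\{0,2\}$ and letters at odd positions in $\{1,3\}$. Consequently the first letter of any factor $u$ of $b_4$ determines the parity $\pi(u)\in\{0,1\}$ of every starting position at which $u$ occurs; set $\pi(X):=\pi(h(X))$ for each variable $X$.

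For every edge $X_L-Y_R$ of $AG(f)$, the factor $h(X)h(Y)$ of $b_4$ forces $\pi(Y)\equiv\pi(X)+|h(X)|\pmod 2$. Because $f$ is locked, every variable $X$ lies on some walk in $AG(f)$ from $X_L$ to $X_R$; let $X=Y_0,Y_1,\ldots,Y_n=X$ be the sequence of variables along such a walk. Summing the congruences from successive edges, the left-hand side telescopes to $\pi(Y_n)-\pi(Y_0)=0$, while a careful bookkeeping of the alternation between L- and R-vertices shows that each intermediate $|h(Y_i)|$ appears with even multiplicity on the right and only $|h(X)|$ survives modulo $2$. Hence $|h(X)|\equiv 0\pmod 2$ for every variable $X$.

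Within any fragment of $f$, consecutive variables' starting positions differ by even amounts, so $\pi$ is constant on every fragment and, by transport through shared variables, on every connected component $\mathcal{C}$ of the hypergraph linking fragments through common variables. On a component with $\pi\equiv 0$, each $h(X)$ starts at an even position and has even length, so $h(X)=m(h'(X))$ for a unique shorter $h'(X)$; using $b_4=m(b_4)$ and the injectivity of $m$ on words, $h'$ restricted to $\mathcal{C}$ maps each fragment of $\mathcal{C}$ to a factor of $b_4$. On a component with $\pi\equiv 1$, I switch to the shifted word $b_4':=b_4[1..]$, which has the same factor set as $b_4$ (since $m$ is primitive and hence $b_4$ is uniformly recurrent) and is itself the fixed point of $m':0\mapsto 30,\,1\mapsto 10,\,2\mapsto 32,\,3\mapsto 12$; the images of $m'$ lie in $\{1,3\}\{0,2\}$, giving $b_4'$ the reversed parity alignment, so relative to $m'$ the component has parity $0$ and the analogous $m'$-desubstitution produces $h'$ mapping each fragment of $\mathcal{C}$ to a factor of $b_4'$, equivalently of $b_4$. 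Combining the two desubstitutions across components yields an occurrence of $f$ in $b_4$ of strictly smaller total length, contradicting minimality.

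The main obstacle I expect is the length-parity bookkeeping: one must track carefully which terms survive modulo $2$ along an alternating walk in the bipartite graph $AG(f)$, distinguishing edges traversed from L-side to R-side versus R-side to L-side. Once the even-length claim is established, the two desubstitutions via $m$ and $m'$ are essentially symmetric, and the only further care needed is to check that the shifted picture still produces a valid morphism into $b_4$ (guaranteed by uniform recurrence); the descent then closes cleanly.
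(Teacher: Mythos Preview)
The paper does not prove \Cref{thm:locked}; it is quoted from Baker--McNulty--Taylor~\cite{BNT89} without argument, so there is no in-paper proof to compare against. That said, your descent is sound and is the classical approach; it is also very close in spirit to the paper's own proof of the related \Cref{ti_b4}.

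The parity bookkeeping you flag as the main obstacle does go through cleanly. Along a walk $X_L=(Y_0)_L-(Y_1)_R-(Y_2)_L-\cdots-(Y_{2k+1})_R=X_R$ in $AG(f)$, the edge congruences are $\pi(Y_{2i+1})-\pi(Y_{2i})\equiv|h(Y_{2i})|$ (from the factor $Y_{2i}Y_{2i+1}$) and $\pi(Y_{2i+1})-\pi(Y_{2i+2})\equiv|h(Y_{2i+2})|$ (from the factor $Y_{2i+2}Y_{2i+1}$). Subtracting the second family from the first, the left side telescopes to $\pi(Y_{2k+1})-\pi(Y_0)=\pi(X)-\pi(X)=0$ while on the right every $|h(Y_{2i})|$ with $i\ge1$ cancels and only $|h(Y_0)|=|h(X)|$ remains. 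Your treatment of the $\pi=1$ components via the conjugate morphism $m'$ is also correct: the identity $b_4[1..]=m'(b_4[1..])$ holds because in $b_4$ the letter at a position determines that position modulo~$4$ (in particular $\texttt{1}$ sits at positions $\equiv1\pmod4$ and $\texttt{3}$ at positions $\equiv3\pmod4$), so the pair $b_4[2i+1]b_4[2i+2]$ is a function of $b_4[i+1]$ alone. The paper, in its proof of \Cref{ti_b4}, handles the analogous situation by shifting each $h(X)$ one letter to the right rather than introducing $m'$; both devices work, and yours is arguably tidier for a general locked formula where the special $XYX$ fragment shape is not available.
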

\begin{theorem}[{\cite[Proposition 1.13]{Cassaigne1994}}]\label{thm:frag2}
If every fragment of an avoidable formula $f$ has length 2, then $b_4$ avoids~$f$.
\end{theorem}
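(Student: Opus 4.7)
The plan is to reduce the problem to the locked case via the Bean--Ehrenfeucht--McNulty machinery of \Cref{redequivunav} and then invoke \Cref{thm:locked}. Since $f$ is avoidable, \Cref{redequivunav} tells us that $f$ cannot be reduced to the empty formula, so iterating the deletion of free sets terminates at some non-empty sub-formula $f^{*}$ with no free set, i.e., a locked sub-formula.

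The key observation is that the length-$2$ hypothesis makes each reduction step occurrence-preserving. When a variable $X$ is deleted from a length-$2$ fragment $XY$ (or $YX$), only the length-$1$ remnant $Y$ survives, and the constraint that $h(Y)$ be a factor of the ambient word is automatic as long as $Y$ still appears in some length-$2$ fragment (any variable whose length-$2$ fragments all involve deleted letters becomes isolated and is removed in the next step anyway, so we can assume this throughout). Thus the reduction can be identified with restriction of $f$ to the sub-digraph induced by the surviving variables. In particular, $f^{*}$ is just the sub-formula of $f$ induced by a subset of its variables, and the restriction of any occurrence $h$ of $f$ to those variables is automatically an occurrence of $f^{*}$. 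Consequently, any word avoiding $f^{*}$ also avoids $f$, and \Cref{thm:locked} yields that $b_{4}$ avoids the locked formula $f^{*}$, hence avoids $f$.

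The main subtlety is the occurrence-preservation step. For a formula with a fragment of length at least~$3$, deleting an internal variable $X$ from a fragment $UXV$ produces the concatenated fragment $UV$, and an occurrence of the original need not induce an occurrence of the reduced formula: there is no reason for $h(U)h(V)$ to be a factor of the word just because $h(U)h(X)h(V)$ is. The length-$2$ hypothesis is exactly what turns each reduction into a harmless restriction to an induced sub-formula, allowing us to lift the locked-case result of \Cref{thm:locked} to the length-$2$ case.
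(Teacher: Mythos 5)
Your argument is correct. The paper does not actually prove \Cref{thm:frag2} (it is quoted from Cassaigne's thesis), but your strategy of running the Bean--Ehrenfeucht--McNulty reductions down to a non-empty locked core $f^{*}$, noting that $f^{*}$ divides $f$ because deleting a letter from a length-$2$ fragment leaves a \emph{factor} of that fragment, and then invoking \Cref{thm:locked}, is essentially the same dichotomy the paper uses to prove the generalization \Cref{hybrid} (divisible by a locked formula versus reducible, hence unavoidable by \Cref{redequivunav}). Your closing remark correctly pinpoints why the hypothesis matters: for fragments of length $\ge 3$ the deletion of an internal variable yields a subsequence that need not be a factor, which is exactly what forces the additional $T_i$/$XYX$ case analysis in \Cref{hybrid}.
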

\Cref{thm:frag2} will be extended to hybrid formulas, see \Cref{hybrid} in \Cref{sec:xyx}.

Let us give here a result that will be needed in various parts of the paper.
\begin{lemma}\label{sqf}
$ABA.ACA.ABCA.ACBA.ABCBA\preceq AA$.
\end{lemma}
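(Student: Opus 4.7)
The statement asserts the divisibility $f \preceq AA$: any word $w$ admitting an occurrence of $f = ABA.ACA.ABCA.ACBA.ABCBA$ must contain a square. Concretely, for any non-erasing morphism $h$ with $h(A)=a$, $h(B)=b$, $h(C)=c$, if $aba$, $aca$, $abca$, $acba$, $abcba$ are all factors of $w$, then $w$ contains some $uu$ with $u \neq \varepsilon$. The plan is a proof by contradiction: assume $w$ is square-free. First I reduce to the generic case where $a, b, c$ are pairwise distinct words. If $b = c$ then $abcba = a b^3 a$ already contains the square $bb$; if $a = b$ (resp.\ $a = c$) then $aba = a^3$ (resp.\ $aca = a^3$) contains $aa$. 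So I may assume $a, b, c$ are pairwise distinct.

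The main argument exploits the common $a\ldots a$ shape of the five fragments combined with their five different interior fillers $b$, $c$, $bc$, $cb$, $bcb$. These record that $w$ must contain copies of $a$ at five prescribed distances apart: $|a|+|b|$, $|a|+|c|$, $|a|+|b|+|c|$ (from both length-4 fragments), and $|a|+2|b|+|c|$. The pairwise differences include $|b|$, $|c|$, and $|b|-|c|$, each of which is a candidate period. I pick two fragments that share a long prefix — for instance $abca$ and $abcba$, which both begin with the length-$(|a|+|b|+|c|)$ word $abc$ — and align their leading $a$'s; square-freeness of $w$ together with a Fine--Wilf-style periodicity argument then forces a period dividing $|b|$ on the relevant initial segment. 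A parallel comparison between $aca$ and $acba$ (both starting with $ac$) forces a period dividing $|c|$. Combining these two constraints restricts $a$, $b$, $c$ enough that the long fragment $abcba$, of length $2|a| + 2|b| + |c|$, is itself forced to contain a repetition of length at most half its own length, i.e.\ a square. Since $abcba$ is a factor of $w$, this contradicts the square-freeness of $w$.

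The main obstacle is that the five fragment occurrences in $w$ need not literally sit at overlapping positions, so a single Fine--Wilf application on one region of $w$ is not enough. The remedy is to use the $a$-anchors: every fragment begins and ends with $a$, so each occurrence in $w$ provides copies of $a$ at known offsets, and the five prescribed interior fillers must be compatible with the letters of $w$ at the corresponding positions. I expect the detailed proof to split on the relative sizes of $|a|$, $|b|$, $|c|$ (with WLOG $|b| \le |c|$, and a further split on whether $|a| \le |b|$): in each sub-case, either two of the fragment occurrences actually overlap, giving the direct Fine--Wilf contradiction sketched above, or one can read off the contradiction from the abstract distance data together with the letter-level constraints at the $ab$-, $bc$-, $ca$-, $ac$-, $cb$-, and $ba$-boundaries determined by the fillers.
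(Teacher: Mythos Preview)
You have read the divisibility relation backwards. In this paper, $f\preceq g$ means that $g$ is divisible by $f$, i.e., there is a non-erasing morphism sending every fragment of $f$ into some fragment of $g$; the consequence for words is that \emph{avoiding $f$ implies avoiding $g$} (equivalently: containing $g$ implies containing $f$). So $ABA.ACA.ABCA.ACBA.ABCBA\preceq AA$ asserts that any (recurrent) word containing a square must contain an occurrence of the five-fragment formula, and hence that any recurrent word avoiding this formula is square-free. The paper proves this in one line by exhibiting an occurrence inside the Zimin word $Z^2(AA)=AABAACAABAA$, namely $A\mapsto A$, $B\mapsto ABA$, $C\mapsto ACA$; since a recurrent word avoids $AA$ iff it avoids $Z^2(AA)$, this suffices.

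Your plan instead aims at the converse implication: that any word containing an occurrence of the formula must contain a square. That implication is false, so no amount of Fine--Wilf case analysis can close the argument. For a concrete counterexample, take $a=\texttt{0}$, $b=\texttt{1}$, $c=\texttt{2}$ and the word
\[
w=\texttt{0}\,\texttt{1}\,\texttt{0}\,\texttt{3}\,\texttt{0}\,\texttt{2}\,\texttt{0}\,\texttt{4}\,\texttt{0}\,\texttt{1}\,\texttt{2}\,\texttt{0}\,\texttt{5}\,\texttt{0}\,\texttt{2}\,\texttt{1}\,\texttt{0}\,\texttt{6}\,\texttt{0}\,\texttt{1}\,\texttt{2}\,\texttt{1}\,\texttt{0}.
\]
Each of the separators $\texttt{3},\texttt{4},\texttt{5},\texttt{6}$ occurs exactly once, so any square in $w$ would lie entirely inside one of the maximal $\{\texttt{0},\texttt{1},\texttt{2}\}$-blocks; but those blocks are $\texttt{010}$, $\texttt{020}$, $\texttt{0120}$, $\texttt{0210}$, $\texttt{01210}$, all square-free. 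Yet $w$ visibly contains $aba,\,aca,\,abca,\,acba,\,abcba$ as factors. Thus a square-free word can carry an occurrence of the formula, and the statement you set out to prove is not the lemma.
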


\begin{proof}
Indeed, $Z^2(AA)=AABAACAABAA$ contains the occurrence $A\to A$, $B\to ABA$, $C\to ACA$ of $ABA.ACA.ABCA.ACBA.ABCBA$.
\qed
\end{proof}

Thus, if $w$ is a recurrent word that avoids a formula dividing $ABA.ACA.ABCA.ACBA.ABCBA$, then $w$ is square-free.

Recall that the repetition threshold $RT(n)$ is the smallest real number $\alpha$ such that there exists an infinite $a^+$-free word over $\Sigma_n$.
The proof of Dejean's conjecture established that $RT(2)=2$, $RT(3)=\tfrac75$, $RT(4)=\tfrac74$, and $RT(n)=\tfrac{n}{n-1}$ for every $n\ge5$.
An infinite $RT(n)^+$-free word over $\Sigma_n$ is called a Dejean word.

\section{Nice formulas}\label{sec:nice}
All the nice formulas considered so far in the literature are also $3$-avoidable.
This includes doubled patterns~\cite{O16}, circular formulas~\cite{circular}, the nice formulas in the $3$-avoidance basis~\cite{circular},
and the minimally nice ternary formulas in \Cref{mntf}~\cite{OchemRosenfeld2017}.

\begin{theorem}[\cite{circular,OchemRosenfeld2017}\label{n33}]
 Every nice formula with at most $3$ variables is $3$-avoidable.
\end{theorem}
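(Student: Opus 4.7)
The plan is to show that every nice formula $f$ on at most three variables is divisible by a formula that has already been proved $3$-avoidable in the cited literature; monotonicity of $\lambda$ under divisibility then gives $\lambda(f)\le 3$. I would organize the argument by the number of variables of $f$ and, within each size, reduce to a minimal form by stripping any fragment that is a factor of another fragment.

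For one and two variables, I would enumerate the nice formulas up to renaming of variables and removal of fragments contained in others. In one variable the only nice formula is (equivalent to) $AA$, which is a doubled pattern; it is $3$-avoidable by~\cite{O16}. In two variables one is left with a short list: any such formula is either a doubled binary pattern (handled by~\cite{O16}) or a nice binary formula lying in the $2$- or $3$-avoidance basis, each of which is shown $3$-avoidable in~\cite{circular}.

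The substantive case is three variables. The key step is to reduce $f$ to a minimally nice ternary formula: if $f$ is divisible by a nice formula on strictly fewer variables, or properly contains a smaller nice ternary formula, we are done by the preceding paragraph or by induction on the total length of fragments. Otherwise $f$ is minimally nice in the ternary sense, and the classification already performed in~\cite{circular,OchemRosenfeld2017} shows that the set of such formulas is precisely the finite list tabulated in~\Cref{mntf}. For each entry of that list, an explicit infinite word over $\Sigma_3$ avoiding it is produced in~\cite{OchemRosenfeld2017}, typically as the morphic image of one of $b_3$, $v_3$, $w_3$, or of a Dejean word under a tailored uniform morphism.

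The main obstacle is the enumeration itself: one must certify that no minimally nice ternary formula has been overlooked. This becomes a finite computation once one bounds the length of fragments that can appear in a minimally nice formula, since any sufficiently long fragment contains a shorter nice factor and hence is divisible. With that bound in hand, the enumeration can be run exhaustively and each surviving formula handed off to the morphic construction; verifying that a candidate morphic image avoids a given formula is itself routine, via the standard lift of the pattern-avoidance condition to a check on a sufficiently long prefix of the image word.
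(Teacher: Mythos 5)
Your proposal is correct and follows essentially the same route as the paper, which simply delegates to the cited works: reduce an arbitrary nice formula on at most $3$ variables to a minimally nice one that divides it, invoke the classification of minimally nice ternary formulas, and check $3$-avoidability of each via explicit morphic constructions. One small correction: the full list of minimally nice ternary formulas is not just \Cref{mntf} but consists of the nice formulas in the $3$-avoidance basis \emph{together with} the formulas of \Cref{mntf} (the table only collects those that are not minimally avoidable), though this does not affect the validity of your argument since both families are shown $3$-avoidable in \cite{circular} and \cite{OchemRosenfeld2017} respectively.
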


We have a risky conjecture that would generalize both \Cref{n33} and the $3$-avoidability of doubled patterns.
\begin{conjecture}\label{conj:nice}
 Every nice formula is $3$-avoidable.
\end{conjecture}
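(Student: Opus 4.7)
The plan is to reduce the problem to the known 3-avoidability of doubled patterns and then bridge the residual gap by a direct combinatorial analysis of an explicit avoiding word. The first observation is that, for any nice formula $f=f_1.f_2.\cdots.f_k$, the concatenation $p=f_1f_2\cdots f_k$ is a doubled pattern: each variable appears twice in some fragment, hence at least twice in $p$. By the 3-avoidability of doubled patterns \cite{O16}, there exists an infinite word $w$ over $\Sigma_3$ avoiding $p$. This is not yet sufficient, because avoiding $p$ only forbids the fragment images from appearing \emph{consecutively} in $w$, whereas avoiding $f$ forbids them from appearing anywhere, even independently. The strategy would be to revisit the morphic construction underlying \cite{O16} and argue that the same $w$ (or a slight modification of it) in fact avoids $f$ as well.

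Suppose $w$ contained an occurrence of $f$ via a morphism $h$. Niceness gives, for every variable $X$, a fragment $f_{i(X)}$ in which $X$ appears twice, and hence the block $h(X)$ appears twice within the single factor $h(f_{i(X)})$ of $w$, at distance bounded in terms of $|f|$ and $\max_Y|h(Y)|$. Collecting these constraints across all variables yields a tight web of local repetitions in $w$. Combined with structural properties of $w$ (bounds coming from Dejean's theorem, low factor complexity, or avoidance of short square-like factors as in \Cref{sqf}), these should force an impossible configuration on $h$. A reasonable first step in this direction would be to settle the case where each fragment of $f$ is itself doubled, since then each $f_i$ is a 3-avoidable doubled pattern and one may try to choose $w$ uniformly for the whole family.

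The main obstacle is that the occurrences of distinct fragments $f_i$ and $f_j$ may be located at completely unrelated positions in $w$, so the constraints do not localize to a single factor, as they do for pure patterns. This independence causes a combinatorial blow-up in the number of morphisms $h$ one has to rule out, and it is precisely where a direct imitation of the doubled-pattern argument breaks down. Overcoming it will likely require either a very rigid infinite word $w$ (automatic, with linear factor complexity) forcing a global rigidity on the placements of the images $h(f_i)$, or a refined entropy-compression or Lovász Local Lemma argument with an enlarged dependency graph tracking all fragment positions simultaneously. The authors' labelling of \Cref{conj:nice} as ``risky'' suggests that controlling this explosion is genuinely hard, and I would expect it to be the central difficulty of any attempted proof.
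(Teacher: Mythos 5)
The statement you were asked to prove is labelled as a \emph{conjecture} in the paper, and the authors explicitly describe it as ``risky'': they do not prove it, and to date it is open. The paper's actual contribution toward it is much weaker, namely that every nice formula is \emph{avoidable} (over some alphabet whose size grows with the number of variables). This is obtained via the avoidability exponent: \Cref{nice} shows $AE(f)\ge1+2^{1-v(f)}>1$ for every nice formula $f$, so $f$ is avoided by any Dejean word over an alphabet large enough that $RT(s)<AE(f)$. That gives $\lambda(f)\le s$ for some $s$ depending on $f$, nowhere near the uniform bound $3$. So there is no ``paper's own proof'' for you to have matched, and your text, read as a proof, has a genuine and unfillable gap.

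Concretely, the gap is exactly where you place it, but it is fatal rather than a detail to be patched. Your reduction to the doubled pattern $p=f_1f_2\cdots f_k$ runs in the unhelpful direction: since any occurrence of $p$ contains an occurrence of every fragment, $p$ is divisible by $f$, hence $\lambda(p)\le\lambda(f)$, and a word avoiding $p$ gives no information about $f$. Everything after that (``these should force an impossible configuration,'' ``a refined entropy-compression or Lov\'asz Local Lemma argument'') is a research programme, not an argument; no specific word is exhibited and no contradiction is derived. One further caution about the strategy itself: \Cref{T_i} shows that there are infinitely many nice formulas with index exactly $3$ (the formulas $T_i$), so unlike the doubled-pattern case one cannot hope that all but finitely many nice formulas are $2$-avoidable; any proof of \Cref{conj:nice} must produce, for each nice formula, a ternary word tailored to it or a single ternary word avoiding a whole divisibility class, and the paper only knows how to do this for at most $3$ variables (\Cref{n33}). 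You should present this as a discussion of the conjecture's difficulty, not as a proof.
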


\Cref{T_i} in \Cref{sec:xyx} shows that there exist infinitely many nice formulas with index $3$.
It means that Conjecture~\ref{conj:nice} would be best possible and it contrasts with the case of doubled patterns,
since we expect that there exist only finitely many doubled patterns with index $3$~\cite{O16,OchemPinlou}.
In this section, we make progress toward Conjecture~\ref{conj:nice} by proving that every nice formula is avoidable and we explain how to get an upper bound on the index of a given nice formula.

\subsection{The avoidability exponent}
Let us consider a useful tool in pattern avoidance that has been defined in~\cite{O16} and already used implicitly in~\cite{Ochem2004}.
The \emph{avoidability exponent} $AE(p)$ of a pattern~$p$ is the largest real $\alpha$ such that every $\alpha$-free word avoids $p$. We extend this definition to formulas.

Let us show that $AE(ABCBA.CBABC)=\tfrac43$. Suppose for contradiction that a $\tfrac43$-free word contains an occurrence $h$ of $ABCBA.CBABC$.
We write $y=|h(Y)|$ for every variable $Y$. The factor $h(ABCBA)$ is a repetition with period $|h(ABCB)|$. So we have $\frac{a+b+c+b+a}{a+b+c+b}<\tfrac43$.
This simplifies to $2a<2b+c$. Similarly, $CBABC$ gives $2c<a+2b$, $BAB$ gives $2b<a$, and $BCB$ gives $2b<c$. Summing up these four inequalities gives $2a+4b+2c<2a+4b+2c$, which is a contradiction.
On the other hand, the word \texttt{01234201567865876834201234} is $\paren{\tfrac43^+}$-free and contains the occurrence $A\to\texttt{01}$, $B\to\texttt{2}$, $C\to\texttt{34}$ of $ABCBA.CBABC$.

As a second example, we obtain that $AE(ABCDBACBD)=1.246266172\ldots$.
When we consider a repetition $uvu$ in an $\alpha$-free word, we derive that $\frac{|uvu|}{|uv|}<\alpha$, which gives $\beta|u|<|v|$ with $\alpha=1+\frac1{\beta+1}$. We consider an occurrence $h$ of the pattern.
The maximal repetitions in $ABCDBACBD$ are $ABCDBA$, $BCDB$, $BACB$, $CDBAC$, and $DBACBD$. They imply the following inequalities.
$$\left\{
\begin{array}{l}
 \beta a\le 2b+c+d\\
 \beta b\le c+d\\
 \beta b\le a+c\\
 \beta c\le a+b+d\\
 \beta d\le a+2b+c
\end{array}
\right.$$
We look for the smallest $\beta$ such that this system has no solution. Notice that $a$ and $d$ play symmetric roles. Thus, we can set $a=d$ and simplify the system.
$$\left\{
\begin{array}{l}
 \beta a\le a+2b+c\\
 \beta b\le a+c\\
 \beta c\le 2a+b
\end{array}
\right.$$
Then $\beta$ is the largest eigenvalue of the matrix $\cro{\begin{smallmatrix} 1 & 2 & 1\\ 1 & 0 & 1\\ 2 & 1 & 0\end{smallmatrix}}$ that corresponds to the latter system.
So $\beta=3.060647027\ldots$ is the largest root of the characteristic polynomial $x^3-x^2-5x-4$. Then $\alpha=1+\frac1{\beta+1}=1.246266172\ldots$

This matrix approach is a convenient trick to use when possible. It was used in particular for some doubled patterns such that every variable occurs exactly twice~\cite{O16}.
It may fail if the number of inequalities is strictly greater than the number of variables or if the formula contains a repetition $uvu$ such that $|u|\ge2$.
In any case, we can fix a rational value to $\beta$ and ask a computer algebra system whether the system of inequalities is solvable.
Then we can get arbitrarily good approximations of $\beta$ (and thus $\alpha$) by a dichotomy method.

Of course, the avoidability exponent is related to divisibility.
\begin{lemma}\label{ae}
If $f\preceq g$, then $AE(f)\le AE(g)$.
\end{lemma}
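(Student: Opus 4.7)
The plan is a short chase through the definitions. The relation $f\preceq g$, as it is used in the proof of \Cref{sqf}, means that some Zimin iterate $Z^k(g)$ is divisible by $f$: there is a non-erasing morphism $\phi$ sending every fragment of $f$ to a factor of a fragment of $Z^k(g)$. The first step is to convert this embedding into an avoidance implication. Composing $\phi$ with any would-be occurrence of $Z^k(g)$ in a word $w$ produces an occurrence of $f$, so any $w$ avoiding $f$ also avoids $Z^k(g)$; and iterating the Preliminaries equivalence that a recurrent word avoids $p$ iff it avoids $Z(p)$, this gives that every recurrent word avoiding $f$ also avoids $g$.

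The second step feeds this into the definition of $AE$. Fix any $\alpha < AE(f)$. By definition, every infinite $\alpha$-free word avoids $f$; in particular every recurrent $\alpha$-free infinite word avoids $f$, and hence also avoids $g$ by the previous paragraph. Any infinite $\alpha$-free word $w$ admits a recurrent infinite $\alpha$-free word in its shift-closure (Proposition~5.1.13 of~\cite{Fogg}, as used in the introduction), and a standard extraction argument transfers any occurrence of $g$ in $w$ to a recurrent $\alpha$-free word, which would contradict the previous sentence. Therefore every $\alpha$-free word avoids $g$, so $\alpha \le AE(g)$; letting $\alpha \to AE(f)^-$ gives $AE(f) \le AE(g)$.

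The main potential obstacle is the extraction argument that moves $g$-avoidance between recurrent and general $\alpha$-free words, but this is the standard compactness trick in the area; the combinatorial content of the lemma is simply the morphism composition in the first paragraph.
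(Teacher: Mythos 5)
There is a genuine gap, and it sits exactly where you flagged it. The ``standard extraction argument'' goes the wrong way: Proposition~5.1.13 of~\cite{Fogg} yields a recurrent word whose set of factors is \emph{contained} in the set of factors of $w$, so the finitely many factors of $w$ that witness an occurrence of $g$ have no reason to survive in the extracted recurrent word. Concretely, take $w=ut$ where $u$ is a finite $\alpha$-free word containing an occurrence of $g$ and $t$ is an infinite $\alpha$-free word over a disjoint alphabet: every recurrent word in the shift-closure of $w$ uses only letters of $t$ and contains no occurrence of $g$ whatsoever. So you cannot pass from ``every recurrent $\alpha$-free word avoids $g$'' to ``every $\alpha$-free word avoids $g$'' by extraction; you would need a \emph{construction} (repeat the occurrence separated by long $\alpha$-free blocks over fresh letters, as the paper does in the proof that $AE(f)>1$ implies a nice divisor), and even then controlling the exponent of the glued word takes work. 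Note also that the paper's own examples compute $AE$ using \emph{finite} witnesses, so restricting attention to infinite recurrent words is doubly off-track.

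The detour that forces you into this swamp is your decision to read $\preceq$ through Zimin iterates. The paper's official definition of divisibility in the Preliminaries is direct: $f\preceq g$ means there is a non-erasing morphism $h$ sending each fragment of $f$ to a factor of a fragment of $g$. With that definition, composing $h$ with an occurrence of $g$ in \emph{any} word $w$ -- finite or infinite, recurrent or not -- yields an occurrence of $f$ in $w$; hence every word avoiding $f$ avoids $g$, and the lemma is a two-line definition chase: for every $\alpha$ such that all $\alpha$-free words avoid $f$, all $\alpha$-free words avoid $g$, so $AE(f)\le AE(g)$. No recurrence, no Zimin, no extraction. You are right that \Cref{sqf} invokes the Zimin-extended notion, but the paper is careful there to draw conclusions only for recurrent words; the applications of Lemma~\ref{ae} (splitting, deleting fragments, identifying variables) all use direct divisibility, for which your first and last sentences already constitute a complete proof.
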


The avoidability exponent depends on the repetitions induced by $f$. We have $AE(f)=1$ for formulas such as $f=AB.BA.AC.CA.BC$ or $f=AB.BA.AC.BC.CDA.DCD$
that do not have enough repetitions. That is, for every $\varepsilon>0$, there exists a $(1+\varepsilon)$-free word that contains an occurrence of $f$.

Let us investigate formulas with non-trivial avoidability exponent, that is, $AE(f)>1$. To show that a nice formula has a non-trivial avoidability exponent (see \Cref{nice}),
we first introduce a notion of minimality for nice formulas similar to the notion of minimally avoidable for general formulas.
A nice formula $f$ is \emph{minimally nice} if there exists no nice formula $g$ such that $v(g)\le v(f)$ and $g\prec f$.
Alternatively, splitting a minimally nice formula on any of its fragments leads to a non-nice formula. The following property of every minimally nice formula is easy to derive.
If a variable $V$ appears as a prefix of a fragment $\phi$, then
\begin{itemize}
 \item $V$ is also a suffix of $\phi$ (since otherwise we can split on $\phi$ and obtain a nice formula),
 \item $\phi$ contains exactly two occurrences of $V$ (since otherwise we can remove the prefix letter $V$ from $\phi$ and obtain a nice formula),
 \item $V$ is neither a prefix nor a suffix of any fragment other than $\phi$ (since otherwise we can remove this prefix/suffix letter $V$ from the other fragment and obtain a nice formula),
 \item Every fragment other than $\phi$ contains at most one occurrence of $V$ (since otherwise we can remove the prefix letter $V$ from $\phi$ and obtain a nice formula).
\end{itemize}

\begin{lemma}\label{nice}
If $f$ is a nice formula, then $AE(f)\ge1+2^{1-v(f)}$.
\end{lemma}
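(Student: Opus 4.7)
Write $v:=v(f)$, $K:=2^{v-1}-1$, and $x:=|h(X)|$ for each variable $X$. Suppose for contradiction that $w$ is $(1+2^{1-v})$-free and that $h$ is an occurrence of $f$ in $w$. The starting point, analogous to the $ABCBA.CBABC$ computation given earlier, is the following: for each variable $X$, niceness provides two consecutive occurrences of $X$ in some fragment; letting $\beta_X$ be the word between them (a word over the $v-1$ other variables), the factor $h(X\beta_X X)$ of $w$ is a repetition of exponent $1+x/(x+|h(\beta_X)|)$, and the freeness hypothesis yields
\[Kx<|h(\beta_X)|\quad\text{for every variable }X.\qquad(\ast)\]

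The plan is to prove the lemma by induction on $v$. The base case $v=1$ is immediate, since the closest pair of the unique variable must be adjacent, so $h(AA)=h(A)^2$ is a square in $w$, contradicting $2$-freeness. For the inductive step, I would first pick a variable $X^\star$ with maximal image length $x^\star$. Since every variable occurring in $\beta_{X^\star}$ has image length at most $x^\star$, we have $|h(\beta_{X^\star})|\le|\beta_{X^\star}|\cdot x^\star$, and combining this with $(\ast)$ forces $|\beta_{X^\star}|\ge 2^{v-1}$. Thus $\beta_{X^\star}$ is a pattern of length at least $2^{v-1}$ over at most $v-1$ variables whose $h$-image is a factor of $w$. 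When $\beta_{X^\star}$, viewed as a one-fragment formula on the variables actually appearing in it, is itself nice, the induction hypothesis (together with the fact that a $(1+2^{1-v})$-free word is a fortiori $(1+2^{2-v})$-free) yields the contradiction immediately.

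The main obstacle will be the case when $\beta_{X^\star}$ is not nice, i.e., some variable occurs only once in it. In that case my plan is to iterate the max-variable argument inside $\beta_{X^\star}$: among the variables appearing at least twice in the current sub-factor, pick the one of largest image, apply $(\ast)$ to its closest pair (which is also a pair in some fragment of $f$, so $(\ast)$ still applies), and extract a further sub-factor on a strictly smaller variable set. After at most $v-1$ such peelings the variable set is exhausted while the current sub-factor still has positive length, which is the contradiction. The subtle technical point I expect to need the most care is that at intermediate levels the overall maximum variable may occur only once, so one has to apply $(\ast)$ to a variable of possibly smaller image and bound $|h(\text{gap})|$ by $|\text{gap}|$ times a weaker upper bound on variable lengths; controlling the propagation of these estimates across the successive levels is precisely what pins the exponent in the final bound down to $1+2^{1-v}$.
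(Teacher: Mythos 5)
Your setup is sound and, up to the point where you derive $|\beta_{X^\star}|\ge 2^{v-1}$, it coincides with the paper's argument: the inequality $(\ast)$, the choice of a variable of maximal image, and the conclusion that the gap has pattern-length at least $2^{v-1}$ are exactly the paper's ``short gap'' computation, and your treatment of the case where $\beta_{X^\star}$ is doubled is correct. The gap is in the remaining case, and it is precisely the ``subtle technical point'' you flag but do not resolve: the iterated peeling does not close. At the second level you apply $(\ast)$ to a variable $Y_1$ of maximal image \emph{among those occurring at least twice in $\beta_{X^\star}$}, obtaining $Ky_1<|h(\gamma_1)|$; but $\gamma_1$ may contain variables occurring only once in $\beta_{X^\star}$, whose images are bounded only by $x^\star$, which can be arbitrarily larger than $y_1$. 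The best available estimate is then $|h(\gamma_1)|\le (v-2)\,x^\star+|\gamma_1|\,y_1$, which gives $|\gamma_1|>K-(v-2)\,x^\star/y_1$ --- vacuous as soon as $y_1\ll x^\star$. So you obtain no lower bound on the pattern-length of $\gamma_1$, you cannot guarantee that any variable repeats inside it, and the ``variable set exhausted while the sub-factor is nonempty'' contradiction never materializes. The inequalities $(\ast)$ for the singly-occurring variables live in other fragments and say nothing about $\gamma_1$, so there is no evident way to ``control the propagation.''

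What is missing is a purely combinatorial fact, which is exactly what the paper invokes (Claim 3 of Ochem--Pinlou): every word of length at least $2^k$ over at most $k$ letters contains a nonempty factor in which every occurring letter occurs at least twice. Applied to $\beta_{X^\star}$ (length at least $2^{v-1}$, at most $v-1$ variables), this yields a doubled pattern $d$ on at most $v-1$ variables whose $h$-image is a factor of $w$, and your inductive hypothesis then gives the contradiction exactly as in your ``doubled'' branch. (The paper organizes this slightly differently: it first reduces to \emph{minimally nice} formulas via divisibility, so that the doubled pattern found inside the gap contradicts minimality rather than an induction hypothesis on $w$; either organization works, but both need the $2^k$ claim.) With that claim substituted for the peeling, your proof is complete.
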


\begin{proof}
Notice that $AE(AA)=2$ and $AE(ABA.BAB)=\tfrac32$, which settles the case $v(f)\le2$. Suppose that $f$ contradicts the lemma.
Since $1+2^{1-v(f)}$ is decreasing with $v(f)$, we can assume that $f$ is a minimally nice formula by Lemma~\ref{ae}.

Then there exists a $\paren{1+2^{1-v(f)}}$-free word $w$ containing an occurrence $h$ of~$f$. Let $X$ be a variable of $f$ such that $|h(X)|\ge|h(Y)|$ for every variable $Y$.
Thus, for every sequence $s$ of variables, $|h(s)|\le|s|\times|h(X)|$. Since $f$ is nice, $f$ contains a factor of the form $XzX$. By minimality, $z$ does not contain $X$, so that $v(z)\le v(f)-1$.

If $|z|\ge2^{v(z)}$, then $z$ contains a doubled pattern with at most $v(z)$ variables~\cite[Claim 3]{OchemPinlou}. This contradicts the minimality of $f$.

If $|z|\le2^{v(z)}-1$, then the exponent of $h(XzX)$ in $w$ is
$\frac{|h(XzX)|}{|h(Xz)|}=1+\frac{|h(X)|}{|h(Xz)|}\ge1+\frac{|h(X)|}{|Xz|\times|h(X)|}=1+\frac1{|Xz|}\ge1+\frac1{2^{v(z)}}\ge1+\frac1{2^{v(f)-1}}=1+2^{1-v(f)}$.
This contradicts that $w$ is $\paren{1+2^{1-v(f)}}$-free.
\qed
\end{proof}

The circular formulas studied in~\cite{circular} show that $AE(f)$ can be as low as $1+(v(f))^{-1}$.
Moreover, our example $AE(ABCDBACBD)=1.246266172\ldots$ shows that lower avoidability exponents exist among nice formulas with at least $4$ variables.
However, the bound in \Cref{nice} is probably very far from optimal.

We will describe below a method to construct infinite words avoiding a formula. This method can be applied if and only if the formula $f$ satisfies $AE(f)>1$.
So we are interested in characterizing the formulas $f$ such that $AE(f)>1$. By \Cref{ae,nice}, if $f$ is a formula such that there exists a nice formula $g$ satisfying $g\preceq f$, then $AE(f)>1$.
Now we prove that the converse also holds, which gives the following characterization.

\begin{theorem}
A formula $f$ satisfies $AE(f)>1$ if and only if there exists a nice formula $g$ such that $g\preceq f$.
\end{theorem}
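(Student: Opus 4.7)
The plan is to prove the two directions in turn. For the ``if'' direction, if a nice $g$ satisfies $g\preceq f$, then Lemma~\ref{ae} gives $AE(f)\ge AE(g)$ and Lemma~\ref{nice} gives $AE(g)\ge 1+2^{1-v(g)}>1$, so $AE(f)>1$. The ``only if'' direction is the contrapositive: assuming $f$ has no nice divisor, I will construct, for every $\alpha>1$, an $\alpha$-free word containing an occurrence of $f$.

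The first step is a structural reformulation. Call a nonempty set $S\subseteq\Delta(f)$ \emph{bad} if, for every $X\in S$, some fragment of $f$ contains a factor $XuX$ with $X\notin u$ and $u\in S^{*}$. I claim $f$ has a nice divisor if and only if $f$ admits a bad set. From a bad set, the fragments $\{X u_X X : X\in S\}$ form a nice formula whose identity embedding realizes it as a divisor of $f$. Conversely, given any nice $g\preceq f$ via a morphism $h_0$, the set $S$ of all letters of $\Delta(f)$ appearing in some $h_0(X_g)$ is bad: any $Y\in S$ lies in some $h_0(X_g)$, so the doubling of $X_g$ inside a fragment of $g$ produces, in the corresponding fragment image, two consecutive occurrences of $Y$ whose filler uses only letters of $S$.

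Assuming no bad set, a greedy procedure orders the variables of $f$ as $X_1,\ldots,X_n$ so that for each $i$, every pair of consecutive occurrences of $X_i$ inside any fragment has some $X_j$ with $j>i$ between them; a safe candidate exists at each step, for otherwise the remaining variables would form a bad set. Given $\alpha>1$, I then fix a large integer $N$, pick pairwise disjoint alphabets $\Sigma_1,\ldots,\Sigma_n$ each supporting an $\alpha$-free Dejean word, let $h(X_i)\in\Sigma_i^{*}$ be a factor of length $N^i$ of such a word, and form
\[
w\;=\;h(\phi_1)\,\sigma_1\,h(\phi_2)\,\sigma_2\cdots\sigma_{m-1}\,h(\phi_m),
\]
where each $\sigma_j$ is a long $\alpha$-free string over its own fresh alphabet. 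By construction $w$ contains an occurrence of $f$.

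The verification that $w$ is $\alpha$-free proceeds by case analysis on any potential factor $uvu$ of exponent at least $\alpha$. If $u$ contains a letter from a separator's alphabet, then $u$ must lie entirely inside that separator (spanning its boundary admits no matching second copy), and the Dejean property suffices. Otherwise $u$ lies in a single fragment image. If the two copies of $u$ lie in distinct fragment images, then $v$ contains an entire separator, and with separators chosen long enough the exponent drops below $\alpha$. If both copies lie in the same $h(\phi_j)$ and $u$ is a factor of a single $h(X_i)$, then either the copies lie within one instance of $h(X_i)$ (controlled by Dejean) or they come from two occurrences of $X_i$ in $\phi_j$ whose filler, by the ordering, contains some $X_j$ with $j>i$, making $|v|$ large enough. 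The remaining case is both copies inside the same $h(\phi_j)$ spanning several variable-images: the alphabet pattern of $u$ then forces the same block $B$ of variables to appear twice in $\phi_j$, and letting $X_{i^{*}}$ be the highest-indexed variable of $B$, the ordering guarantees that the filler between the two copies of $B$ contains some $X_j$ with $j>i^{*}$, hence $|h(\text{filler})|\ge N^{i^{*}+1}$ dwarfs $|h(B)|\le|B|\cdot N^{i^{*}}$ and the exponent stays below $\alpha$ for $N$ large. I expect this last case to be the main obstacle; the reduction from block-level to variable-level ordering uses that the highest-indexed variable of $B$ is itself doubled in $\phi_j$ (so the ordering applies) and that its filler, being forced to contain a higher-indexed variable, cannot lie inside $B$.
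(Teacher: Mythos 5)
Your proof is correct and follows essentially the same strategy as the paper's: order the variables so that consecutive occurrences of each one inside a fragment are separated by a later (hence longer-imaged) variable --- your ``bad set'' lemma is precisely the termination argument for the paper's iterative dot-replacement process --- and then map the variables to factors of Dejean words over pairwise disjoint alphabets with geometrically growing lengths. The remaining differences are cosmetic: reversed indexing, explicit separator blocks in place of the paper's fresh variables standing for the dots, and a more detailed case analysis in the $(1+\varepsilon)$-freeness verification.
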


\begin{proof}
What remains to prove is that for every formula $f$ that is not divisible by a nice formula and for every $\varepsilon>0$,
there exists an infinite $(1+\varepsilon)$-free word $w$ containing an occurrence of $f$, such that the size of the alphabet of $w$ only depends on $f$ and $\varepsilon$.

First, we consider the equivalent pattern $p$ obtained from $f$ by replacing every dot by a distinct variable that does not appear in $f$.
We will actually construct an occurrence of $p$. Then we construct a family $f_i$ of pseudo-formulas as follows.
We start with $f_0=p$. To obtain $f_{i+1}$ from $f_i$, we choose a variable that appears at most once in every fragment of $f_i$.
This variable is given the alias name $V_i$ and every occurrence of $V_i$ is replaced by a dot.
We say that $f_i$ is a pseudo-formula since we do not try to normalize $f_i$, that is, $f_i$ can contain consecutive dots
and $f_i$ can contain fragments that are factors of other fragments. However, we still have a notion of fragment for a pseudo-formula.
Since $f$ is not divisible by a nice formula, this process ends with the pseudo-formula $f_{v(p)}$ with no variable and $|p|$ consecutive dots.
The goal of this process is to obtain the ordering $V_0$, $V_1$, $\ldots$, $V_{v(p)-1}$ on the variables of $p$.

The image of every $V_i$ is a finite factor $w_i$ of a Dejean word over an alphabet of $\floor{\varepsilon^{-1}}+2$ letters, so that $w_i$ is $(1+\varepsilon)$-free.
The alphabets are disjoint: if $i\ne j$, then $w_i$ and $w_j$ have no common letter.
Finally, we define the length of $w_i$ as follows: $\abs{w_{v(p)-1}}=1$ and $\abs{w_i}=\floor{\varepsilon^{-1}}\times|p|\times|w_{i+1}|$ for every $i$ such that $0\le i\le v(p)-2$.
Let us show by contradiction that the constructed occurrence $h$ of $p$ is $(1+\varepsilon)$-free.
Consider a repetition $xyx$ of exponent at least $1+\varepsilon$ that is maximal, that is, which cannot be extended to a repetition with the same period and larger exponent.
Since every $w_i$ is $(1+\varepsilon)$-free and since two matching letters must come from distinct occurrences of the same variable, then $x=h(x')$ and $y=h(y')$
where $x'$ and $y'$ are factors of $p$. Our ordering of the variables of $p$ implies that $y'$ contains a variable $V_i$ such that $i<j$ for every variable $V_j$ in $x'$.
Thus, $|y|\ge\abs{w_i}=\floor{\varepsilon^{-1}}\times|p|\times|w_{i+1}|\ge\floor{\varepsilon^{-1}}\times\abs{x}$, which contradicts the fact that the exponent of $xyx$ is at least $1+\varepsilon$.

To obtain the infinite word $w$, we can insert our occurrence of $p$ into a bi-infinite $(1+\varepsilon)$-free word over an alphabet of $\floor{\varepsilon^{-1}}+2$ new letters.
So $w$ is an infinite $(1+\varepsilon)$-free word over an alphabet of $v(p)\paren{\floor{\varepsilon^{-1}}+2}+1$ letters which contains an occurrence of $f$.
\qed
\end{proof}

By \Cref{nice}, every nice formula is avoidable since it is avoided by a Dejean word over a sufficiently large alphabet.
Thus, if a formula is nice and minimally avoidable, then it is minimally nice. This is the case for every formula in the $3$-avoidance basis, except $AB.AC.BA.CA.CB$.
However, a minimally nice formula is not necessarily minimally avoidable. Indeed, we have shown~\cite{OchemRosenfeld2017} that the set of minimally nice ternary formulas
consists of the nice formulas in the $3$-avoidance basis, together with the minimally nice formulas in \Cref{mntf} that can be split to $AB.AC.BA.CA.CB$.

\begin{table}[htbp]
 \begin{itemize}
 \item $ABA.BCB.CAC$
 \item $ABCA.BCAB.CBAC$ and its reverse
 \item $ABCA.BAB.CAC$
 \item $ABCA.BAB.CBC$ and its reverse
 \item $ABCA.BAB.CBAC$ and its reverse
 \item $ABCBA.CABC$ and its reverse
 \item $ABCBA.CAC$
\end{itemize}
\caption{The minimally nice ternary formulas that are not minimally avoidable.}\label{mntf}
\end{table}

\subsection{Avoiding a nice formula}
Recall that a nice formula $f$ is such that $AE(f)>1$. We consider the smallest integer $s$ such that $RT(n)<AE(f)$.
Thus, every Dejean word over $\Sigma_s$ avoids~$f$, which already gives $\lambda(f)\le s$. Recall that a morphism is $q$-uniform if the image of every letter has length $q$.
Also, a uniform morphism $h:\Sigma^*_s\rightarrow\Sigma^*_e$ is \emph{synchronizing} if for any $a,b,c\in\Sigma_s$ and $v,w\in\Sigma^*_e$,
if $h(ab)=vh(c)w$, then either $v=\varepsilon$ and $a=c$ or $w=\varepsilon$ and $b=c$.
For increasing values of $q$, we look for a $q$-uniform morphism $h:\Sigma_s^*\to\Sigma_e^*$ such that $h(w)$ avoids $f$ for every $RT(s)^+$-free word $w\in\Sigma_s^{\ell}$,
where $\ell$ is given by Lemma~\ref{sync} below. Recall that a word is $(\beta^+,n)$-free if it contains no repetition with exponent strictly greater than $\beta$ and period at least $n$.

\begin{lemma}~\cite{Ochem2004}\label{sync}
Let $\alpha,\beta\in\mathbb{Q},\ 1<\alpha<\beta<2$ and $n\in\mathbb{N}^*$. Let $h:\Sigma^*_s\rightarrow\Sigma^*_e$ be a synchronizing $q$-uniform morphism (with $q\ge1$).
If $h(w)$ is $(\beta^+,n)$-free for every $\alpha^+$-free word $w$ such that $|w|<\max\paren{\frac{2\beta}{\beta-\alpha},\frac{2(q-1)(2\beta-1)}{q(\beta-1)}}$,
then $h(w)$ is $(\beta^+,n)$-free for every (finite or infinite) $\alpha^+$-free word~$w$.
\end{lemma}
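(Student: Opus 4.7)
The plan is to argue by contradiction: suppose some $\alpha^+$-free word $w$ (finite or infinite) is such that $h(w)$ contains a repetition $uvu$ of period $p=|uv|\ge n$ and exponent strictly greater than $\beta$, equivalently with $|u|>(\beta-1)p$. I will extract from $w$ a short $\alpha^+$-free factor $w'$ such that $h(w')$ still contains this forbidden repetition, and show that $|w'|$ falls below the maximum in the hypothesis, contradicting the assumed property and completing the proof.

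Concretely, take $w'$ to be the shortest factor of $w$ for which $h(w')$ contains $uvu$. By $q$-uniformity and the minimality of $w'$, the blocks of $h(w')$ cover $uvu$ with at most $q-1$ extra letters on each side, giving
\[
q|w'|\le|uvu|+2(q-1).
\]
It remains to bound $|uvu|$ from above, and this splits into two cases according to the size of $|u|$ relative to $q$. If $|u|\ge 2q-1$, each occurrence of $u$ contains at least one full block $h(c)$, and the synchronizing property of $h$ forces both occurrences of $u$ to be aligned identically modulo $q$; in particular, $p$ is a multiple of $q$. Matching up the complete blocks inside the two copies of $u$ then produces a repetition $\bar u\bar v\bar u$ in $w'$ of period $p/q$ with $|\bar u|\ge(|u|-2(q-1))/q$. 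Since $w'$ is $\alpha^+$-free, its exponent is at most $\alpha$, giving $|\bar u|\le(\alpha-1)p/q$; combined with $|u|>(\beta-1)p$, this forces $p<2(q-1)/(\beta-\alpha)$, and a careful accounting that exploits the integrality of $p/q$ converts this into $|w'|<\frac{2\beta}{\beta-\alpha}$, the first term of the max. If instead $|u|\le 2q-2$, the synchronizing step does not apply, but the exponent condition alone forces $p<|u|/(\beta-1)\le 2(q-1)/(\beta-1)$, hence $|uvu|<2(q-1)\beta/(\beta-1)$, and the displayed inequality above yields the second term $\frac{2(q-1)(2\beta-1)}{q(\beta-1)}$.

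The delicate step is the first case, where the synchronizing property of $h$ must be leveraged to lift the repetition of $h(w')$ back to $w'$. The two occurrences of $u$ inside $h(w)$ may a priori start at different block offsets; the first task is to argue, from the fact that $u$ is long enough to contain a full block and that the block decomposition of such a factor is unique by synchronization, that the two offsets in fact coincide. The main bookkeeping is then to relate the exponent of the lifted repetition to that of $uvu$ while accounting for the at most $q-1$ letters of $u$ that may lie outside its complete blocks on either end, which is what produces the $(q-1)$ factors in the length bound. Once this lifting is performed, the remainder of the argument reduces to an elementary manipulation of the inequalities between $|u|,|v|,p,\alpha,\beta,q$, with the integrality of $p/q$ in Case A used to absorb the residual boundary terms into the clean first expression of the max.
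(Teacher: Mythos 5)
Note first that the paper does not prove this lemma: it is imported verbatim from~\cite{Ochem2004}, so the comparison is against the argument of that reference rather than anything in the present text. Your architecture coincides with it: argue by contradiction, pass to a shortest factor $w'$ of $w$ with $uvu$ a factor of $h(w')$ so that $q|w'|\le|uvu|+2(q-1)$, split on whether $|u|\ge 2q-1$, use synchronization to align the two copies of $u$ modulo $q$ and lift the repetition to $w'$ in the long case, and use only the exponent inequality in the short case. The short case is complete as written and yields exactly the second term of the max.

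There is, however, a genuine gap in the long case, precisely at the step you defer to ``a careful accounting that exploits the integrality of $p/q$''. The route you announce --- feeding the bound on $p$ back into the displayed inequality $q|w'|\le|uvu|+2(q-1)$ --- cannot produce the first term of the max. Indeed, from $|\bar u|\le(\alpha-1)p/q$ and $|\bar u|\ge(|u|-2(q-1))/q$ you get $|u|\le(\alpha-1)p+2(q-1)$, hence $|uvu|\le\alpha p+2(q-1)$ and $q|w'|\le\alpha p+4(q-1)$; with $p<\frac{2(q-1)}{\beta-\alpha}$ this only gives $|w'|<\frac{2\alpha}{\beta-\alpha}+4=\frac{2\beta}{\beta-\alpha}+2$, and the integrality of $p/q$ does not recover the missing $2$. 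The missing idea is to discard that inequality in this case and count the blocks of $w'$ directly around the lifted repetition: writing $h(w')=s\,uvu\,p'$ with $|s|,|p'|\le q-1$, at most one block of $h(w')$ lies strictly before the first full block contained in the first copy of $u$ (those blocks cover fewer than $(q-1)+q$ letters), and symmetrically at most one lies strictly after the last full block of the second copy; since $\bar u\bar v\bar u$ spans exactly $|\bar u|+p/q$ blocks, this gives $|w'|\le|\bar u|+\frac pq+2\le\alpha\frac pq+2<\frac{2\alpha}{\beta-\alpha}+2=\frac{2\beta}{\beta-\alpha}$, using $\frac pq<\frac{2(q-1)}{q(\beta-\alpha)}<\frac{2}{\beta-\alpha}$. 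With that substitution your outline closes; the remaining ingredients you invoke (a full block $h(c)$ inside $u$ forcing alignment via the synchronization property, injectivity of $h$ on letters to turn block equalities into letter equalities of $w'$) are sound, though the injectivity, which does follow from the definition of synchronizing, deserves an explicit word.
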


Given such a candidate morphism $h$, we use Lemma~\ref{sync} to show that for every $RT(s)^+$-free word $w\in\Sigma_s^*$, the image $h(w)$ is $\paren{\beta^+,n}$-free.
The pair $\paren{\beta,n}$ is chosen such that $RT(s)<\beta<AE(f)$ and $n$ is the smallest possible for the corresponding $\beta$.
If $\beta<AE(f)$, then every occurrence $h$ of $f$ in a $\paren{\beta^+,t}$-free word is such that the length of the $h$-image of every variable of $f$ is upper bounded by a function of $n$ and $f$ only.
Thus, the $h$-image of every fragment of $f$ has bounded length and we can check that $f$ is avoided by inspecting a finite set of factors of words of the form $h(w)$.

\subsection{The number of fragments of a minimally avoidable formula}
Interestingly, the notion of (minimally) nice formula is helpful in proving the following.
\begin{theorem}
The only minimally avoidable formula with exactly one fragment is $AA$.
\end{theorem}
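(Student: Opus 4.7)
The plan is to reduce the problem to a dichotomy based on whether the split of the underlying pattern is a nice formula, and then to close each case by invoking the avoidability of nice formulas.

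First I would note that a single-fragment formula in canonical form has no isolated variable, so it is a doubled pattern $p$. This immediately handles $|p|\le 2$: the only possibility is $p=AA$, which is avoidable (with $\lambda(AA)=3$), while its split $A.A$ has empty adjacency graph, so that $\{A\}$ is a free set and $A.A$ reduces to the empty formula, hence is unavoidable by \Cref{redequivunav}. So $AA$ is minimally avoidable, which takes care of the base case.

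For $|p|=n\ge 3$, the goal is to show that the split $f'=p_1\cdots p_{n-1}.p_2\cdots p_n$ is avoidable, so that $p$ is not minimally avoidable. The key structural step, which I expect to be the only one needing care, is the following observation: a variable $V$ of $p$ occurs at most once in each fragment of $f'$ if and only if $V$ appears in $p$ exactly at positions $1$ and $n$, forcing $p_1=p_n=V$. Consequently $f'$ is nice unless $p$ has the special form $XqX$ for some nonempty word $q$ over variables distinct from $X$. If $f'$ is nice, then \Cref{nice} gives $AE(f')>1$, so $f'$ is avoided by any Dejean word on a sufficiently large alphabet. In the exceptional case $p=XqX$, since $p$ is doubled and the variables of $q$ do not occur at positions $1$ or $n$, each of them must appear at least twice in $q$; hence $q$ is itself a nonempty doubled (in particular nice) pattern, and by \Cref{nice} some infinite word $w$ avoids $q$. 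Any occurrence of $f'=Xq.qX$ in $w$ would exhibit $h(q)$ as a factor of $w$, giving an occurrence of the pattern $q$ in $w$, a contradiction; so $w$ avoids $f'$ as well.

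Combining the two subcases, the split $f'$ is avoidable whenever $|p|\ge 3$, so no such $p$ is minimally avoidable. Together with the base case $p=AA$ this proves the theorem. The only real obstacle is pinning down the structural characterization of when $f'$ fails to be nice; once the shape $p=XqX$ is isolated, both subcases collapse to a single application of \Cref{nice}.
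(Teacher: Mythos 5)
Your proof is correct and follows essentially the same route as the paper: both arguments reduce to the dichotomy between the split being nice and the pattern having the form $ApA$ with $p$ a nonempty doubled pattern not containing $A$, and both cases are closed by the avoidability of nice formulas via \Cref{nice}. The only difference is cosmetic: you derive this structural dichotomy directly from the split, whereas the paper obtains it by first observing that a minimally avoidable nice formula is minimally nice and then invoking the general properties of minimally nice formulas established earlier in \Cref{sec:nice}.
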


\begin{proof}
A formula with one fragment is a doubled pattern. Since it is minimally avoidable, it is a minimally nice formula.
By the properties of minimally nice formulas discussed above, the unique fragment of the formula is either $AA$ or is of the form $ApA$
such that $p$ does not contain the variable $A$. Thus, $p$ is a doubled pattern such that $p\prec ApA$, which contradicts that $ApA$ is minimally avoidable.
\qed
\end{proof}

By contrast, the family of \emph{two-birds} formulas, which consists of $ABA.BAB$, $ABCBA.CBABC$, $ABCDCBA.DCBABCD$, and so on, shows that
there exist infinitely many minimally avoidable formulas with exactly two fragments.
Every two-birds formula is nice.
Let us check that every two-birds formula $AB\cdots X\cdots BA.X\cdots A\cdots X$ is minimally avoidable. Since the two fragments play symmetric roles, it is sufficient to split on the first fragment.
We obtain the formula $AB\cdots X\cdots B.B\cdots X\cdots BA.X\cdots A\cdots X$ which divides the pattern $B\cdots X\cdots BAB\cdots X\cdots B=Z(B\cdots X\cdots B)$.
This pattern is equivalent to $B\cdots X\cdots B$, which is unavoidable. Thus, every two-birds formula is indeed minimally avoidable.

Concerning the index of two-birds formulas, we have seen that $\lambda(ABA.BAB)=3$ and $\lambda(ABCBA.CBABC)=2$ \cite{circular}. Computer experiments suggest that larger two-birds formulas are easier to avoid.
\begin{conjecture}\label{conj:2birds}
 Every two-birds formula with at least $3$ variables is $2$-avoidable.
\end{conjecture}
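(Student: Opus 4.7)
The natural plan is to apply the machinery of \Cref{sec:nice} to the two-birds family, writing $f_n=A_1A_2\cdots A_n\cdots A_2A_1.A_nA_{n-1}\cdots A_1\cdots A_{n-1}A_n$. First I would compute $AE(f_n)$ via the matrix approach. The only $xyx$-factorizations inside each palindromic fragment take the form $x=A_i$ and $y=A_{i+1}\cdots A_n\cdots A_{i+1}$, giving the inequalities $\beta a_i<2(a_{i+1}+\cdots+a_{n-1})+a_n$ for $i=1,\ldots,n-1$ from the first fragment together with a symmetric system from the second. Imposing the symmetry $a_i=a_{n+1-i}$ reduces this to a positive-eigenvalue problem whose root $\beta_n$ yields $AE(f_n)=1+1/(\beta_n+1)$: this recovers $AE(f_3)=4/3$ and produces values such as $AE(f_4)=1+2/(3+\sqrt{17})$, all strictly greater than~$1$.

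For each individual $n\ge3$, I would then apply the morphic construction scheme: choose the smallest source alphabet $\Sigma_{s_n}$ with $RT(s_n)<AE(f_n)$ and search by computer for a synchronizing uniform morphism $h_n:\Sigma_{s_n}^*\to\Sigma_2^*$ such that $h_n(w)$ avoids $f_n$ for every $RT(s_n)^+$-free word~$w$. \Cref{sync} reduces the verification to a finite check on short source words, making the search effective. For $n=3$ this recovers the known $\lambda(f_3)=2$, and for each larger $n$ the same template should succeed after enlarging $s_n$ to compensate for the drop in $AE(f_n)$.

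The principal obstacle is uniformity across the infinite family. Because $f_n$ and $f_{n+1}$ are incomparable under~$\preceq$ (their fragments have lengths $2n-1$ and $2n+1$), no divisibility argument bundles the cases together, and individually each~$n$ requires its own morphism and its own finite verification. A genuinely uniform proof would presumably exhibit a single binary morphic word~$w^*$---for instance a morphic image of some~$b_k$---that avoids every $f_n$ simultaneously. An occurrence of $f_n$ in $w^*$ forces a factor with the nested palindromic block structure $h(A_1)\cdots h(A_n)\cdots h(A_1)$ together with its twin from the second fragment, and one would argue via desubstitution that such a factor can be pushed down to an occurrence of $f_n$ at a lower substitution level, leading to an infinite descent. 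Designing $w^*$ so that this desubstitution goes through for \emph{every} $n\ge3$ is where a new combinatorial idea seems to be required, and is the crux of the conjecture.
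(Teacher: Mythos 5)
The statement you are trying to prove is a \emph{conjecture}: the paper offers no proof of it, only the remark that computer experiments suggest larger two-birds formulas are easier to avoid, together with the two known data points $\lambda(ABA.BAB)=3$ and $\lambda(ABCBA.CBABC)=2$. So there is no ``paper proof'' to compare against, and your proposal, by your own admission in its final paragraph, does not close the argument either. Your plan is a reasonable programme rather than a proof: the avoidability-exponent computation and the per-$n$ search for a synchronizing uniform morphism into $\Sigma_2^*$ (validated via \Cref{sync}) is exactly the methodology \Cref{sec:nice} describes for bounding $\lambda(f)$ for a \emph{single} formula, and it would let you verify $2$-avoidability for any fixed $n$ you care to compute. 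But it cannot yield the conjecture for all $n\ge3$ simultaneously, precisely for the reason you identify: the $f_n$ are pairwise incomparable under divisibility, $AE(f_n)\to1$ as $n$ grows (by \Cref{nice} the guaranteed lower bound is only $1+2^{1-n}$), so the source alphabet $\Sigma_{s_n}$ and the morphism must change with $n$, and there are infinitely many cases.

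Two further cautions about the parts you treat as routine. First, the matrix trick for $AE(f_n)$ is stated in the paper to work when the number of independent inequalities matches the number of variables and all repetitions $uvu$ have $|u|=1$; after imposing the symmetry $a_i=a_{n+1-i}$ you should check that the reduced system really is square and that its Perron eigenvalue is what you want, rather than assert it. Second, and more importantly, the target of the morphism here is the \emph{binary} alphabet, where $RT(2)=2$ and every sufficiently long binary word contains squares; the scheme of \Cref{sec:nice} produces $(\beta^+,t)$-free images with $\beta<AE(f_n)$, and since $AE(f_n)<\tfrac43$ already for $n=4$ and decreases further, you would need binary words that are $(\beta^+,t)$-free for $\beta$ close to $1$ with correspondingly large $t$, which forces the uniform morphisms (and the finite checks) to grow with $n$. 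This is exactly where a single uniform object --- your hypothetical $w^*$ with a desubstitution/infinite-descent argument in the spirit of \Cref{ti_b4} --- would be needed, and you are right that inventing it is the actual content of the conjecture. In short: no error of reasoning, but no proof either; the statement remains open.
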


\section{Characterization of some famous morphic words}\label{sec:charac}
Our next result give characterizations of $w_3$, up to renaming, that use just one formula. Then we give similar characterizations of $b_3$ and $b_2$.
Let $\sigma=\texttt{1}/\texttt{2}/\texttt{0}$ be the morphism that cyclically permutes $\Sigma_3$.

\begin{theorem}\label{transitive}
Let $f$ be a ternary formula such that $ABA.BCB.ACA\preceq f\preceq ABA.ABCBA.ACA.ACB.BCA$. Every ternary recurrent word avoiding $f$ is equivalent to $w_3$, $\sigma(w_3)$, or $\sigma^2(w_3)$.
\end{theorem}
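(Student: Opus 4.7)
The plan is to combine the upper bound $f\preceq ABA.ABCBA.ACA.ACB.BCA$ with \Cref{w_3}. I first argue that $w$ is square-free, and then that $w$ avoids one of the three pairs of length-$3$ palindromes, namely \texttt{121}/\texttt{212}, or \texttt{020}/\texttt{202}, or \texttt{010}/\texttt{101}; the third bullet of \Cref{w_3} (possibly after conjugating by $\sigma$ or $\sigma^2$) then yields the conclusion.

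For square-freeness: since $f\preceq ABA.ABCBA.ACA.ACB.BCA$, the recurrent word $w$ avoids this five-fragment formula. Observing that $ACB$ is a prefix of $ACBA$ and $BCA$ is a suffix of $ABCA$, the identity morphism on $\{A,B,C\}$ shows that any occurrence of $ABA.ACA.ABCA.ACBA.ABCBA$ is also an occurrence of $ABA.ABCBA.ACA.ACB.BCA$; equivalently, $ABA.ABCBA.ACA.ACB.BCA\preceq ABA.ACA.ABCA.ACBA.ABCBA$. Composing with \Cref{sqf}, one obtains $ABA.ABCBA.ACA.ACB.BCA\preceq AA$, so $w$ is square-free.

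For the palindrome statement, I argue by contradiction: suppose that for every letter $c\in\{0,1,2\}$ the word $w$ contains a length-$3$ palindrome $xyx$ with $c\notin\{x,y\}$ (equivalently, by \Cref{distance}, for every $c$ there are consecutive occurrences of $c$ at distance $4$ in $w$), and aim to exhibit an occurrence of $ABA.ABCBA.ACA.ACB.BCA$ in $w$. The key auxiliary observation is that in any ternary square-free word, the length-$3$ palindrome $aba$ extends uniquely on both sides to the length-$5$ palindrome $cabac$, where $c$ is the third letter; this automatically provides the length-$3$ permutations $cab$ and $bac$ as factors. Using the cyclic symmetry among $w_3$, $\sigma(w_3)$, $\sigma^2(w_3)$, one may reduce to a sub-case where \texttt{010} is a factor of $w$, and then split on which palindromes from the pairs \texttt{020}/\texttt{202} and \texttt{121}/\texttt{212} also occur in $w$. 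In each sub-case I exhibit a morphism $h$, typically with $h(A),h(B),h(C)$ single letters from $\{0,1,2\}$ but in certain sub-cases longer, e.g.\ $h(A)=01$, $h(B)=2$, $h(C)=021$, whose five fragment images are all forced to occur as factors of $w$, either as the assumed palindromes, as their length-$5$ palindromic contexts, or as permutation factors arising from those contexts.

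The main obstacle is the sub-case analysis, and particularly those sub-cases in which only one palindrome from each of \texttt{020}/\texttt{202} and \texttt{121}/\texttt{212} occurs in $w$: there the permutations $acb$ and $bca$ demanded by the fragments $ACB$ and $BCA$ are not automatically forced by the length-$5$ context of a single palindrome, and one must either chain forced extensions of palindromes using the recurrence of $w$ or resort to a morphism $h$ with images longer than one letter. The lower bound $ABA.BCB.ACA\preceq f$ does not appear to be used directly in this implication; its role is to pin down the range of formulas $f$ to which the theorem applies.
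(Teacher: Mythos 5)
Your reduction to square-freeness is correct and matches the paper: the identity morphism shows $ABA.ABCBA.ACA.ACB.BCA\preceq ABA.ACA.ABCA.ACBA.ABCBA$ (since $ACB$ and $BCA$ are factors of $ACBA$ and $ABCA$), so \Cref{sqf} applies. Your reformulation of the remaining goal (show that $w$ avoids \emph{both} members of one of the pairs \texttt{121}/\texttt{212}, \texttt{020}/\texttt{202}, \texttt{010}/\texttt{101}, then invoke \Cref{w_3}) is also the right target, and the observation that $aba$ extends uniquely to $cabac$ in a two-sided-extendable square-free context is sound. The problem is that the heart of the proof --- deriving an occurrence of $ABA.ABCBA.ACA.ACB.BCA$ from the assumption that $w$ contains one palindrome from each pair --- is asserted rather than carried out, and the sub-cases you flag as ``the main obstacle'' are precisely the ones where the local argument provably does not close. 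Concretely, if $w$ contains \texttt{010}, \texttt{020}, \texttt{121} and avoids \texttt{101}, \texttt{202}, \texttt{212}, the forced length-$5$ extensions are \texttt{20102}, \texttt{10201}, \texttt{01210}; taking $A\to\texttt{0}$, $B\to\texttt{1}$, $C\to\texttt{2}$ gives $ABA$, $ACA$, $ABCBA$ for free, but neither \texttt{021} nor \texttt{120} occurs among the factors of these three extensions, so $ACB$ and $BCA$ are not forced, and no alternative single-letter assignment fares better. Chasing further forced extensions branches quickly (e.g.\ \texttt{01210} can continue to either \texttt{0121020} or \texttt{0121021}), so this is not a finite local computation you can wave at.

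This is exactly the point where the paper replaces local forcing by a global finite search: a backtracking computation shows that \emph{no infinite} ternary word simultaneously avoids squares, the formula $ABA.ABCBA.ACA.ACB.BCA$, and all three rotations of the specific length-$18$ word $v=\texttt{210201202101201021}$. Hence $w$ contains (WLOG) $v$ itself, and $v$ already contains \texttt{101}, \texttt{1201}, \texttt{1021}, \texttt{12021} (resp.\ \texttt{202}, \texttt{2102}, \texttt{2012}, \texttt{21012}), i.e.\ every fragment image of the occurrence $A\to\texttt{1},B\to\texttt{2},C\to\texttt{0}$ (resp.\ $A\to\texttt{2},B\to\texttt{1},C\to\texttt{0}$) of $ABA.ACA.ABCA.ACBA.ABCBA$ \emph{except} the palindrome \texttt{121} (resp.\ \texttt{212}). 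So $w$ must avoid both \texttt{121} and \texttt{212}, and \Cref{w_3} concludes. Without this (or an equivalent exhaustive verification), your case analysis has a genuine gap; to repair your version you would need to either perform a comparable computer search or exhibit and verify explicit occurrence morphisms in every sub-case, including the ones you left open.
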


\begin{proof}
Using Cassaigne's algorithm~\cite{cassaignealgo}, we have checked that $w_3$ avoids $ABA.BCB.ACA$. By divisibility, $w_3$ avoids~$f$.

Let $w$ be a ternary recurrent word avoiding $f$. By Lemma~\ref{sqf}, $w$ is square-free.

Let $v=\texttt{210201202101201021}$. A computer check shows that no infinite ternary word avoids $ABA.ABCBA.ACA.ACB.BCA$, squares, $v$, $\sigma(v)$, and $\sigma^2(v)$.
So, without loss of generality, $w$ contains $v$. If $w$ contains \texttt{121}, then $w$ contains the occurrence $A\to\texttt{1}$, $B\to\texttt{2}$, $C\to\texttt{0}$ of $ABA.ACA.ABCA.ACBA.ABCBA$.
Similarly, if $w$ contains \texttt{212}, then $w$ contains the occurrence $A\to\texttt{2}$, $B\to\texttt{1}$, $C\to\texttt{0}$ of $ABA.ACA.ABCA.ACBA.ABCBA$.
Thus, $w$ avoids squares, \texttt{121}, and \texttt{212}. By \Cref{w_3}, $w$ is equivalent to $w_3$

By symmetry, every ternary recurrent word avoiding $f$ is equivalent to $w_3$, $\sigma(w_3)$, or $\sigma^2(w_3)$.
\qed
\end{proof}

\begin{theorem}\label{thm:b3} 
Let $f$ be such that
\begin{itemize}
 \item $ABCA.ABA.ACA\preceq f\preceq ABCA.ABA.ACA.ACB.CBA$,
 \item $ABCA.ABA.BCB.AC\preceq f\preceq ABCA.ABA.ABCBA.ACB$, or 
 \item $ABCA.ABA.BCB.CBA\preceq f\preceq ABCA.ABA.ABCBA.ACB$.
\end{itemize}
Every ternary recurrent word avoiding $f$ is equivalent to $b_3$, $\sigma(b_3)$, or $\sigma^2(b_3)$.
\end{theorem}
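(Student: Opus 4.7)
The plan is to adapt the template of \Cref{transitive} to $b_3$. Since Cases 2 and 3 share the same upper bound $ABCA.ABA.ABCBA.ACB$, there are only two distinct upper-bound subarguments to carry out.

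For the ``$b_3$ avoids $f$'' direction, I would run Cassaigne's algorithm on each of the three lower-bound formulas to verify that $b_3$ avoids them. Since $g_{\mathrm{lower}} \preceq f$ in each case, divisibility then gives that $b_3$, $\sigma(b_3)$, and $\sigma^2(b_3)$ all avoid $f$.

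For the converse, let $w$ be a ternary recurrent word avoiding $f$, so $w$ avoids the corresponding upper bound $g_{\mathrm{upper}}$. In each case the identity morphism witnesses $g_{\mathrm{upper}} \preceq ABA.ACA.ABCA.ACBA.ABCBA$: in Case 1 the fragments $ABCA$, $ABA$, $ACA$, $ACB$, $CBA$ appear as factors of fragments of the long formula, and likewise $ABCA$, $ABA$, $ABCBA$, $ACB$ for Cases 2 and 3. By \Cref{sqf}, $w$ is square-free. Next, for each of the two upper bounds, a computer search analogous to the one producing $v$ in \Cref{transitive} should provide a seed word $v$ such that every ternary word avoiding $g_{\mathrm{upper}}$, squares, and each of $v$, $\sigma(v)$, $\sigma^2(v)$ is finite. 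After applying a suitable power of $\sigma$ (which preserves avoidance of $f$), one may assume $w$ contains $v$. If $w$ contained the factor $010$, then combining this factor with the factors supplied by $v$ would exhibit an explicit non-erasing morphism realizing an occurrence of $ABA.ACA.ABCA.ACBA.ABCBA$ in $w$, contradicting the square-freeness argument above; the symmetric argument handles $212$. By \Cref{w_3}, $w$ is equivalent to $b_3$, and restoring the initial power of $\sigma$ yields the three possibilities in the statement.

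The principal obstacle is the seed search: each $v$ must simultaneously contain all the auxiliary factors needed to realize $ABA.ACA.ABCA.ACBA.ABCBA$ from a single forbidden triplet (for $010$, the factors $020$, $0120$, $0210$, and $01210$; for $212$, the corresponding $\sigma^2$-images $202$, $2102$, $2012$, and $21012$), while remaining short enough that the enumeration of ternary words avoiding $g_{\mathrm{upper}}$, squares, and the orbit of $v$ still terminates. Once suitable seeds are identified for the two upper bounds, the remaining verifications are essentially mechanical.
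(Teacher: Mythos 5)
Your proposal follows the paper's proof essentially step for step: Cassaigne's algorithm on the lower bounds for the "$b_3$ avoids $f$" direction, square-freeness via Lemma~\ref{sqf} and the identity divisibility of each upper bound into $ABA.ACA.ABCA.ACBA.ABCBA$, a computer-found seed forcing the presence of $020$, $0120$, $0210$, $01210$ and of $202$, $2102$, $2012$, $21012$ so that a single forbidden factor $\texttt{010}$ or $\texttt{212}$ completes an occurrence of $f$, and finally \Cref{w_3}. The paper exhibits one seed $v=\texttt{20210121020120}$ that works for both upper bounds, and your only slip is cosmetic: the second family of required factors comes from the exchange $0\leftrightarrow 2$ (i.e., $A\to\texttt{2}$, $B\to\texttt{1}$, $C\to\texttt{0}$), not from applying $\sigma^2$ to the first family.
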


\begin{proof}
Using Cassaigne's algorithm~\cite{cassaignealgo}, we have checked that $b_3$ avoids $ABCA.ABA.ACA$, $ABCA.ABA.BCB.AC$, and $ABCA.ABA.BCB.CBA$.
By divisibility, $b_3$ avoids~$f$. Let $w$ be a ternary recurrent word avoiding $f$. By Lemma~\ref{sqf}, $w$ is square-free.

Let $v=\texttt{20210121020120}$. A computer check shows that no infinite ternary word avoids $ABCA.ABA.ACA.ACB.CBA$ (resp. $ABCA.ABA.ABCBA.ACB$), squares, $v$, $\sigma(v)$, and $\sigma^2(v)$.

So, without loss of generality, $w$ contains $v$. If $w$ contains \texttt{010}, then $w$ contains the occurrence $A\to\texttt{0}$, $B\to\texttt{1}$, $C\to\texttt{2}$ of $ABA.ACA.ABCA.ACBA.ABCBA$.
Similarly, if $w$ contains \texttt{212}, then $w$ contains the occurrence $A\to\texttt{2}$, $B\to\texttt{1}$, $C\to\texttt{0}$ of $ABA.ACA.ABCA.ACBA.ABCBA$.
Thus, $w$ avoids squares, \texttt{010}, and \texttt{212}. By \Cref{w_3}, $w$ is equivalent to $b_3$.

By symmetry, every ternary recurrent word avoiding $f$ is equivalent to $b_3$, $\sigma(b_3)$, or $\sigma^2(b_3)$.
\qed
\end{proof}

Notice that \Cref{thm:b3} is a complement to~\cite[Theorem 2]{OchemRosenfeld2017} in which we gave a disjoint set of formulas with the same property.
The difference between \Cref{thm:b3} and~\cite[Theorem 2]{OchemRosenfeld2017} is that a different occurrence of $f$ shows that $f$ divides $Z^n(AA)$.  

\begin{theorem}\label{tm2}
Let $f_h=AABCAA.BCB$, $f_e=AABCAAB.AABCAB.AABCB$, and let $f$ be such that $f_h\preceq f\preceq f_e$. Every binary recurrent word avoiding $f$ is equivalent to $b_2$.
\end{theorem}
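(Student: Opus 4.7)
The plan is to adapt the strategy used in \Cref{transitive} and \Cref{thm:b3} to the binary setting.

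First, using Cassaigne's algorithm, verify that $b_2$ avoids $f_h = AABCAA.BCB$. Since $f_h \preceq f$, \Cref{ae} then gives that $b_2$ avoids $f$ as well. This establishes the ``easy'' direction, namely that $b_2$ is itself a word of the desired type.

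Now let $w$ be any binary recurrent word avoiding $f$. Because $f \preceq f_e$, the word $w$ also avoids $f_e$. The goal is to show that $w$ is equivalent to $b_2$. The approach is to prove that $w$ is overlap-free, i.e., contains no factor of the form $axaxa$ with $a\in\{\texttt{0},\texttt{1}\}$ and $x\in\{\texttt{0},\texttt{1}\}^*$. Once overlap-freeness is established, the classical characterization of the Thue--Morse factor language—every infinite binary recurrent overlap-free word has precisely the factor set of $b_2$—immediately gives the equivalence.

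To prove overlap-freeness, I would run computer searches in the spirit of the cited theorems. Concretely, find a binary factor $v$ such that no infinite binary word simultaneously avoids $f_e$, $v$, and its bitwise complement $\bar v$. Since $f_e$ is invariant under the bit-complement $\texttt{0}\leftrightarrow\texttt{1}$, one may assume without loss of generality that $w$ contains $v$. A second finite check then verifies that the combination of containing $v$ and an overlap forces an occurrence of $f_e$, contradicting the assumption on $w$.

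The main obstacle is calibrating this computer verification. Unlike the ternary cases of \Cref{transitive} and \Cref{thm:b3}, \Cref{sqf} cannot be invoked, since $b_2$ itself contains squares and we cannot reduce to the square-free regime. Instead one must exploit the branching structure of $f_e$: its three fragments $AABCAAB$, $AABCAB$, $AABCB$ share the prefix $AABC$ and differ only in their suffixes $AAB$, $AB$, $B$, which tightly constrains how any overlap can embed into a word avoiding $f_e$. The delicate step is thus to produce a $v$ short enough that the searches terminate but long enough to rule out all overlap configurations.
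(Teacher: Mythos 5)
Your overall frame is right---show $b_2$ avoids $f_h$ (hence $f$) by Cassaigne's algorithm, then show any binary recurrent word avoiding $f_e$ is overlap-free and invoke the classical characterization of Thue--Morse. But the central step, overlap-freeness, is left as an unexecuted and problematic computer search. Your proposed second check (``containing $v$ and an overlap forces an occurrence of $f_e$'') is not obviously finite: an overlap $axaxa$ has unbounded period $|ax|$, and unlike the ternary proofs of \Cref{transitive,thm:b3} you have no analogue of \Cref{sqf} to bound the lengths of the images of the variables, a difficulty you yourself flag without resolving. So as written the argument does not close.

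The missing idea is that no search is needed at all: $f_e$ directly divides both $AAA$ and $ABABA$. Indeed $Z(AAA)=AAABAAA$ contains the occurrence $A\to A$, $B\to A$, $C\to B$ of $f_e$, and $Z(ABABA)=ABABACABABA$ contains the occurrence $A\to AB$, $B\to A$, $C\to C$ of $f_e$ (one checks that the images of all three fragments $AABCAAB$, $AABCAB$, $AABCB$ are factors of the respective Zimin words). Since a recurrent word avoids $Z(p)$ iff it avoids $p$, every recurrent word avoiding $f_e$ avoids $AAA$ and $ABABA$, and a binary word avoiding these two patterns is exactly an overlap-free word. This replaces your entire computer-search scheme with two explicit morphisms. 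A minor point besides: you cite \Cref{ae} to transfer avoidance of $f_h$ to $f$, but that lemma concerns avoidability exponents; the transfer follows directly from the definition of divisibility ($f_h\preceq f$ means every word avoiding $f_h$ avoids $f$).
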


\begin{proof}
Using Cassaigne's algorithm~\cite{cassaignealgo}, we have checked that $b_2$ avoids $f_h$. First, $f_e\preceq AAA$ because $Z(AAA)=AAABAAA$ contains the occurrence $A\to A$, $B\to A$, $C\to B$ of $f_e$.
Second, $f_e\preceq ABABA$ because $Z(ABABA)=ABABACABABA$ contains the occurrence $A\to AB$, $B\to A$, $C\to C$ of $f_e$.

Thus, every recurrent word avoiding $f_e$ also avoids $AAA$ and $ABABA$, which means that it is overlap-free. Finally, it is well-known that every binary recurrent word that is overlap-free is equivalent to $b_2$.
\qed
\end{proof}

\section{$xyx$-formulas}\label{sec:xyx}
Recall that every fragment of an $xyx$-formula is of the form $XYX$. We associate to an $xyx$-formula $F$ the directed graph $\overrightarrow{G}$ such that every variable corresponds to a vertex
and $\overrightarrow{G}$ contains the arc $\overrightarrow{XY}$ if and only if $F$ contains the fragment $XYX$. We will also denote by $G$ the underlying simple graph of $\overrightarrow{G}$.

\begin{lemma}\label{homo}
Let $F_1$ and $F_2$ be $xyx$-formulas associated to $\overrightarrow{G_1}$ and $\overrightarrow{G_2}$. If there exists a homomorphism $\overrightarrow{G_1}\to\overrightarrow{G_2}$, then $F_1\preceq F_2$.
\end{lemma}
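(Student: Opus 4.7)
The plan is to convert the digraph homomorphism $\phi\colon \overrightarrow{G_1}\to\overrightarrow{G_2}$ directly into a non-erasing morphism that witnesses the divisibility of $F_2$ by $F_1$, using the very definition of $\preceq$ recalled in the preliminaries (so no iterated Zimin step will be needed). Concretely, I set $h(X) = \phi(X)$ for every variable $X$ of $F_1$, viewing each $\phi(X)$ as the one-letter word over the variable alphabet of $F_2$. Since $h$ sends every letter to a single letter, it is automatically non-erasing.

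The key verification is the fragment check. Because $F_1$ is an $xyx$-formula, its fragments are exactly the words $XYX$ for the arcs $\overrightarrow{XY}\in\overrightarrow{G_1}$, and by construction $h(XYX)=\phi(X)\phi(Y)\phi(X)$. Since $\phi$ is a digraph homomorphism, $\overrightarrow{\phi(X)\phi(Y)}$ is an arc of $\overrightarrow{G_2}$; by the definition of the digraph associated with an $xyx$-formula, this arc corresponds to the fragment $\phi(X)\phi(Y)\phi(X)$ of $F_2$. Thus $h(XYX)$ is literally equal to, and therefore a factor of, a fragment of $F_2$, which is exactly what is required to conclude $F_1\preceq F_2$.

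The only point that needs care is a notational one: whether the shape $XYX$ forces $X\neq Y$, and hence whether $\overrightarrow{G_1}$ and $\overrightarrow{G_2}$ are assumed loopless. Under the standard convention that distinct capital letters stand for distinct variables, the digraphs are loopless and a homomorphism must send arcs to arcs, so the verification above applies verbatim; and even if one allowed loops, a loop $\overrightarrow{ZZ}$ in $\overrightarrow{G_2}$ would correspond to the degenerate fragment $ZZZ$ of $F_2$, so the identity $h(XYX)=\phi(X)\phi(Y)\phi(X)$ would still land inside a genuine fragment of $F_2$. I therefore expect no real obstacle: the lemma is essentially an unfolding of the two definitions, with the digraph homomorphism condition providing precisely the property needed to match an $xyx$-fragment of $F_1$ with an $xyx$-fragment of $F_2$.
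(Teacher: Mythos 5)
Your proof is correct and matches the paper's in substance: the paper factors the homomorphism through vertex identifications and a subgraph inclusion (invoking transitivity of divisibility), whereas you use $\phi$ itself directly as the non-erasing morphism witnessing $F_1\preceq F_2$, sending each fragment $XYX$ of $F_1$ onto the fragment $\phi(X)\phi(Y)\phi(X)$ of $F_2$. The two arguments are the same computation packaged differently, and your aside about loops is a sensible precaution given that the paper does admit $T_1=AAA$ as an $xyx$-formula.
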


\begin{proof}
Since both digraph homomorphism and formula divisibility are transitive relations, we only need to consider the following two cases.
If $G_1$ is a subgraph of $G_2$, then $F_1$ is obtained from $F_2$ by removing some fragments. So every occurrence of $F_2$ is also an occurrence of $F_1$ and thus $F_1\preceq F_2$.
If $G_2$ is obtained from $G_1$ by identifying the vertices $u$ and $v$, then $F_2$ is obtained from $F_1$ by identifying the variables $U$ and $V$.
So every occurrence of $F_2$ is also an occurrence of $F_1$ and thus $F_1\preceq F_2$.
\qed
\end{proof}

For every $i$, let $T_i$ be the $xyx$-formula corresponding to the directed circuit $\overrightarrow{C_i}$ of length $i$, that is, $T_1=AAA$, $T_2=ABA.BAB$, $T_3=ABA.BCB.CAC$, $T_4=ABA.BCB.CDC.DAD$, and so on.
More formally, $T_i$ is the formula with $i$ variables $A_0$, $\ldots$, $A_{i-1}$ which contains the $i$ fragments of length three of the form $A_jA_{j+1}A_j$ such that the indices are taken modulo $i$.
Notice that $T_i$ is a nice formula.
\begin{theorem}\label{T_i}
For every $i\ge2$, $\lambda(T_i)=3$
\end{theorem}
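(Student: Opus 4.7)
The plan is to establish the two inequalities $\lambda(T_i)\le 3$ and $\lambda(T_i)\ge 3$ separately. For the upper bound, I would compute $AE(T_i)$ directly from the cyclic structure of $\overrightarrow{C_i}$. Writing $a_k=|h(A_k)|$ for an occurrence $h$, each fragment $A_jA_{j+1}A_j$ forces $\tfrac{2a_j+a_{j+1}}{a_j+a_{j+1}}<\alpha$ in any $\alpha$-free word, which rearranges (for $\alpha<2$) to $(2-\alpha)a_j<(\alpha-1)a_{j+1}$. Multiplying these $i$ inequalities around the cycle yields $1<\left(\tfrac{\alpha-1}{2-\alpha}\right)^i$, and hence $\alpha>\tfrac32$. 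Therefore $AE(T_i)=\tfrac32>\tfrac75=RT(3)$, so any ternary Dejean word is $(7/5)^+$-free, therefore $(3/2)$-free, therefore avoids $T_i$; this gives $\lambda(T_i)\le 3$.

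For the lower bound in the even case, the map $A_j\mapsto A_{j\bmod 2}$ is a directed graph homomorphism $\overrightarrow{C_i}\to\overrightarrow{C_2}$, so \Cref{homo} gives $T_i\preceq T_2$, and hence $\lambda(T_i)\ge\lambda(T_2)=3$.

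The odd case is the main obstacle, since for $i$ odd there is no homomorphism $\overrightarrow{C_i}\to\overrightarrow{C_2}$, and the only smaller cycle that admits one from $\overrightarrow{C_i}$ is $\overrightarrow{C_1}$, which gives only $\lambda(T_i)\ge\lambda(AAA)=2$. My approach would be to argue directly that no infinite binary word $w$ avoids $T_i$. A first reduction: if $w$ contains a cube $u^3$ (for some nonempty $u$), then setting $A_0=\cdots=A_{i-1}=u$ makes every fragment equal to $u^3$ and produces an occurrence, so $w$ must be cube-free. Moreover, in a cube-free word any $2$-valued assignment of the $h(A_j)$ would amount to a proper $2$-colouring of the odd cycle $\overrightarrow{C_i}$, which is impossible; so an occurrence, if any, must use at least three distinct image words.

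It remains to show that cube-free binary words are nevertheless rich enough to always realise such an occurrence. For $i=3$ this reduces to the statement that every infinite cube-free binary word contains factors $xyx$, $yzy$, $zxz$ for some $x,y,z$, which I would tackle by a finite case analysis on short factors (most likely computer-assisted in the style of the checks invoked in \Cref{transitive,thm:b3,tm2}); once $\lambda(T_3)\ge3$ is established, divisibility gives it for every $i$ that is a multiple of $3$. For the remaining odd $i$ one would in principle need a separate check for each odd prime $p$, since divisibility between the $T_i$'s (via $\overrightarrow{C_{i'}}\to\overrightarrow{C_i}$ iff $i\mid i'$) only propagates lower bounds upwards. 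The truly delicate point, and the part I would spend the most time on, is finding a uniform combinatorial argument covering all odd $i$ at once rather than a per-prime verification.
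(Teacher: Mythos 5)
Your upper bound is correct and takes a genuinely different route from the paper's. You show directly that every $\tfrac32$-free word avoids $T_i$ (the cyclic chain $a_0<a_1<\cdots<a_{i-1}<a_0$ is a clean way to see $AE(T_i)\ge\tfrac32>\tfrac75=RT(3)$), so a ternary Dejean word suffices. The paper instead exhibits an explicit $58$-uniform morphism whose images of $(7/4^+)$-free words over $\Sigma_4$ are $(3/2,3)$-free and then disposes of the short-period occurrences by hand; your argument is shorter and is really just an instance of the paper's own general recipe for avoiding nice formulas via $RT(s)<AE(f)$.

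The lower bound, however, has a genuine gap exactly where you flag it: the odd case. Your plan (a computer check for $i=3$, then a separate check for each odd prime, then hope for a uniform argument) does not terminate as stated, and you supply no uniform argument. The idea you are missing is that the target of the homomorphism in \Cref{homo} need not be a directed cycle. The paper maps every $\overrightarrow{C_i}$ with $i\ge2$ into the digraph $\overrightarrow{D_3}$ on three vertices with four arcs, containing both a circuit of length $2$ and a circuit of length $3$: a homomorphism $\overrightarrow{C_i}\to\overrightarrow{D_3}$ is just a closed walk of length $i$, and every integer $i\ge2$ can be written as $2a+3b$ with $a,b\ge0$. Hence $T_i\preceq H$ where $H=ABA.BAB.ACA.CBC$ is the $xyx$-formula of $\overrightarrow{D_3}$, and a single finite backtracking computation showing $\lambda(H)>2$ settles all $i\ge2$ at once; your even-case reduction to $T_2$ becomes a special case. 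Without this (or some other device covering all odd $i$ simultaneously), your proof is incomplete.
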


\begin{proof}
We use Lemma~\ref{sync} to show that the image of every $(7/4^+)$-free word over $\Sigma_4$ by the following $58$-uniform morphism is $(3/2,3)$-free.
$$
\begin{array}{ll}
\texttt{0}\to&\texttt{0012211002201021120022100112201002112001022011002211201022}\\
\texttt{1}\to&\texttt{0012210022010211220010221120011022010021122011002211201022}\\
\texttt{2}\to&\texttt{0011221002201021122001102201002112001022110012200211201022}\\
\texttt{3}\to&\texttt{0011221002201021120011022010021122001022110012200211201022}\\
\end{array}
$$
In these words, the factor \texttt{010} is the only occurrence $m$ of $ABA$ such that $|m(A)|\ge|m(B)|$. This implies that these ternary words avoid $T_i$ for every $i\ge1$, so that $\lambda(T_i)\le3$.
%
%

To show that $T_i$ is not $2$-avoidable, we consider the $xyx$-formula $H=ABA.BAB.ACA.CBC$ associated to the directed graph $\overrightarrow{D_3}$ on 3 vertices and 4 arcs
that contains a circuit of length 2 and a circuit of length 3. Standard backtracking shows that $\lambda(H)>2$, and even the stronger result that $\lambda(ABAB.ACA.CAC.BCB.CBC)>2$.

For every $i\ge2$, the circuit $\overrightarrow{C_i}$ admits a homomorphism to $\overrightarrow{D_3}$. By Lemma~\ref{homo}, this means that $T_i\preceq H$, which implies that $\lambda(T_i)\ge\lambda(H)\ge3$.
\qed
\end{proof}

\begin{theorem}\label{ti_b4}
For every $i\ge1$, $b_4$ avoids $T_i$.
\end{theorem}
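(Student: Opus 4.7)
My plan is to derive this theorem as an immediate corollary of the forthcoming extension of \Cref{thm:frag2} to hybrid formulas, i.e.\ \Cref{hybrid} stated later in this same section. By construction every fragment of $T_i$ has the form $XYX$, so $T_i$ is an $xyx$-formula, and in particular a hybrid formula. From \Cref{T_i} we have $\lambda(T_i)=3<\infty$ for $i\ge2$, while $T_1=AAA$ is avoidable over $\Sigma_2$ by Thue's theorem; hence every $T_i$ is an avoidable hybrid formula, and \Cref{hybrid} yields that $b_4$ avoids $T_i$.

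If one wishes to avoid that forward reference, the tools already available suffice for many $i$. A short walk in the adjacency graph shows that for $i=1$ and every odd $i\ge3$ the vertices $(A_j)_L$ and $(A_j)_R$ all lie on a single cycle of length $2i$, so no non-empty set of variables is free: $T_i$ is locked, and \Cref{thm:locked} gives that $b_4$ avoids $T_i$. For even $i$, however, the adjacency graph splits into two disjoint cycles of length $i$, every singleton $\{A_j\}$ is free, and $T_i$ is not locked; moreover, splitting every fragment of $T_i$ down to length $2$ produces a reducible formula, so \Cref{thm:frag2} cannot be applied directly either.

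The real obstacle is therefore the even case $i\ge2$, which is precisely what \Cref{hybrid} is designed to settle. I expect its proof to parallel the strategy behind \Cref{thm:frag2}: any occurrence $h$ of $T_i$ in $b_4$ makes $h(A_jA_{j+1}A_j)$ a factor of $b_4$, and the $\tfrac74^+$-freeness of $b_4$ combined with the repetition inequality $|h(A_jA_{j+1}A_j)|/|h(A_jA_{j+1})|<\tfrac74$ forces $|h(A_j)|\le 3|h(A_{j+1})|$ for every $j$; combining this length control with the fact that $b_4$ is the fixed point of a short uniform synchronizing morphism and applying \Cref{sync} should reduce the verification to a finite inspection of short factors of $b_4$.
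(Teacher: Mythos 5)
Your main route is circular. \Cref{hybrid} is proved in the paper precisely by invoking \Cref{ti_b4} (``if $f$ contains a locked formula or a formula $T_i$, then $b_4$ avoids $f$''), and independently of how the paper organizes things, the content of \Cref{hybrid} restricted to the formulas $T_i$ with $i$ even \emph{is} the statement you are asked to prove: those $T_i$ are avoidable hybrid formulas that are neither locked nor reducible to the length-$2$ case, so no previously established result applies to them. Deriving \Cref{ti_b4} from \Cref{hybrid} therefore proves nothing. Your fallback observation is correct and worth keeping: for $i=1$ and for odd $i\ge3$ the adjacency graph of $T_i$ is a single cycle of length $2i$, so $T_i$ is locked and \Cref{thm:locked} applies. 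But that leaves exactly the hard case, even $i\ge2$, unresolved.

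For that case your sketch does not close. The repetition bound from a fragment $A_jA_{j+1}A_j$ gives $|h(A_j)|<\beta\,|h(A_{j+1})|$ for some constant $\beta$, and composing these inequalities around the directed cycle yields only $|h(A_0)|<\beta^i|h(A_0)|$, which is vacuous: it produces neither a contradiction nor an upper bound on the image lengths, so there is no finite set of factors to inspect and \Cref{sync} has nothing to bite on. The paper's proof uses a completely different mechanism, special to $b_4$: since even (resp.\ odd) letters of $b_4$ occur only at even (resp.\ odd) positions, every period $|h(A_jA_{j+1})|$ is even, so all the $|h(A_j)|$ share the same parity; one then normalizes an occurrence (shifting images by one letter when they begin with \texttt{1} or \texttt{3}, or when the lengths are odd) so that every image has even length and starts with \texttt{0} or \texttt{2}, takes preimages under the generating morphism, and contradicts the minimality of $n$ in $m^n(\texttt{0})$. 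Some argument of this descent type, exploiting the structure of $b_4$ rather than exponent bounds, is what is missing from your proposal.
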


\begin{proof}
Suppose for contradiction that there exist $i$ and $n$ such that $m^n(\texttt{0})$ contains an occurrence $h$ of $T_i$. Further assume that $n$ is minimal.
Notice that in $b_4$, every even (resp. odd) letter appears only at even (resp. odd) positions. Thus, for every fragment $XYX$ of $T_i$, the period $|h(XY)|$ of the repetition $h(XYX)$ must be even.
This implies that $|h(X)|$ and $|h(Y)|$ have the same parity. By contagion, the lengths of the images of all the variables of $T_i$ have the same parity. Now we proceed to a case analysis.
\begin{itemize}
 \item Every $|h(X)|$ is even.
 \begin{itemize}
  \item Every $h(X)$ starts with $\texttt{0}$ or $\texttt{2}$. By taking the pre-image by $m$ of every $h(X)$, we obtain an occurrence of $T_i$ that is contained in $m^{n-1}(\texttt{0})$.
  This contradicts the minimality of $n$.
  \item Every $h(X)$ starts with $\texttt{1}$ or $\texttt{3}$. Notice that in $b_4$, the letter $\texttt{1}$ (resp. $\texttt{3}$) is in position $1\pmod{4}$ (resp. $3\pmod{4}$).
  $m^n(\texttt{0})$ contains the occurrence $h'$ of $T_i$ such that $h'(X)$ is obtained from $h(X)$ by adding to the rigth the letter $\texttt{1}$ or $\texttt{3}$
  depending on its position modulo $4$ and by removing the first letter. 
  Since is also contained in $m^n(\texttt{0})$ and every $h'(X)$ starts with $\texttt{0}$ or $\texttt{2}$, $h'$ satisfies the previous subcase.
 \end{itemize}
  \item Every $|h(X)|$ is odd. It is not hard to check that every factor $uvu$ in $b_4$ with $|v|=1$ satisfies $v\in\acc{\texttt{1},\texttt{3}}$ and $u\in\acc{\texttt{0},\texttt{2}}$.
  So $|h(X)|\ge3$ for every variable $X$ of $T_i$. Let $X_1,\cdots, X_i$ be the variables of $T_i$. Up to a shift of indices, we can assume that $j$ and the first and last letters of $h(X_j)$ have the same parity.
  We construct the occurrence $h'$ of $T_i$ as follows. If $j$ is odd, then $h'(X_j)$ is obtained by removing the first letter of $h(X_j)$.
  If $j$ is even, then $h'(X_j)$ is obtained by adding to the right the letter $\texttt{1}$ or $\texttt{3}$ depending on its position modulo $4$.
  Since $h'$ is also contained in $m^n(\texttt{0})$ and every $|h'(X)|$ is even, $h'$ satisfies the previous case.
\end{itemize}
\qed
\end{proof}

Our next result generalizes \Cref{ti_b4,thm:frag2}. Recall that every fragment of a hybrid formula has length 2 or is of the form $XYX$.
\begin{theorem}\label{hybrid}
Every avoidable hybrid formula is avoided by $b_4$.
\end{theorem}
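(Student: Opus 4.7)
The plan is to split into cases based on whether the digraph $\overrightarrow{G}$ associated to the $xyx$-fragments of $f$ contains a directed cycle. In the cyclic case, suppose $\overrightarrow{G}$ contains a directed cycle of length $i$; the corresponding $i$ fragments of $f$ then form, after renaming, a copy of $T_i$, and the inclusion $\overrightarrow{C_i}\hookrightarrow\overrightarrow{G}$ is a digraph homomorphism. Lemma~\ref{homo} thus gives $T_i\preceq f$, so by Theorem~\ref{ti_b4} the word $b_4$ avoids $T_i$ and consequently avoids $f$.

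Otherwise $\overrightarrow{G}$ is a DAG. I would form the length-$2$ formula $f'$ obtained from $f$ by splitting every $xyx$-fragment $XYX$ into the pair $XY$ and $YX$. Then $f'\preceq f$, since for any occurrence $h$ of $f$ the factor $h(XYX)$ decomposes as $h(XY)\cdot h(X)=h(X)\cdot h(YX)$, so $h(XY)$ and $h(YX)$ are factors as well. Moreover $AG(f)=AG(f')$, because both $XYX$ and $XY.YX$ contribute exactly the two edges $\{X_L,Y_R\}$ and $\{Y_L,X_R\}$ to the adjacency graph. Granted that $f'$ is avoidable, Theorem~\ref{thm:frag2} gives that $b_4$ avoids the length-$2$ formula $f'$, and since $f'\preceq f$ we conclude that $b_4$ avoids $f$.

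The main obstacle is verifying that $f'$ is avoidable whenever $f$ is, under the DAG hypothesis. Since splitting preserves $AG$, the free sets of $f$ and $f'$ coincide, but the induced reductions may diverge when a free set contains the middle variable $Y$ of some fragment $XYX$: the reduction of $f$ leaves the length-$2$ fragment $XX$, while the reduction of $f'$ collapses $XY.YX$ to a single trivial length-$1$ fragment. This discrepancy is exactly what breaks the equivalence in the cyclic case, as witnessed by $T_2=ABA.BAB$ being avoidable while its split $AB.BA$ is unavoidable. The DAG hypothesis should circumvent this by permitting an inductive reduction scheme in which one first eliminates a source variable $S$ of $\overrightarrow{G}$; such an $S$ never occurs as the middle of any $xyx$-fragment and therefore affects $f$ and $f'$ identically. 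Induction on $|\overrightarrow{G}|$ should then yield that $f$ and $f'$ have the same reducibility status, and Theorem~\ref{redequivunav} converts this into the required equivalence of avoidability. Making this DAG-based inductive reduction rigorous, in particular handling intermediate reductions where the topological structure of the remaining digraph must be tracked, is the technical heart of the argument.
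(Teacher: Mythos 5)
Your cyclic case is fine and coincides with the paper's: a directed cycle of length $i$ in the $xyx$-digraph gives $T_i\preceq f$ via Lemma~\ref{homo}, and \Cref{ti_b4} concludes. The acyclic case, however, contains a genuine gap, and it is exactly the one you flag yourself. Your whole strategy there rests on the claim that if $f$ is avoidable and its $xyx$-digraph is a DAG, then the fully split length-$2$ formula $f'$ is also avoidable. Divisibility gives only the opposite implication ($f'\preceq f$ yields ``$f'$ avoidable $\Rightarrow$ $f$ avoidable''), and splitting can genuinely destroy avoidability, as $ABA.BAB\mapsto AB.BA$ shows; so this claim carries the entire weight of the proof and is left unproved. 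Worse, the sketch you offer for it does not work as stated: freeness is a property of the adjacency graph $AG(f)$, not of the $xyx$-digraph, so a source $S$ of the DAG need not be a free singleton and cannot in general be ``eliminated first''. For instance, $ABA.BB$ has acyclic digraph with source $A$, yet $A_L\sim B_R\sim B_L\sim A_R$ in $AG$, so no singleton is free --- the formula is locked. Your DAG branch never confronts locked (sub)formulas at all, even though lockedness is precisely the mechanism by which an acyclic hybrid formula can be avoidable, and \Cref{thm:locked} is the tool that handles it.

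The paper avoids your unproved claim entirely by proving the complementary statement directly: if a hybrid formula contains neither a locked formula nor a $T_i$, then it is reducible, hence unavoidable by \Cref{redequivunav} (so the avoidable case reduces to \Cref{thm:locked} and \Cref{ti_b4}, with no appeal to \Cref{thm:frag2}). The key observation is that if $\acc{X}$ is a free singleton and $f$ contains a fragment $YXY$, then the edges $X_LY_R$ and $Y_LX_R$ force $\acc{Y}$ to be free as well; iterating this walks backwards along arcs of the digraph and, by acyclicity, terminates at a free singleton $Z$ that is the middle variable of no fragment. Deleting $Z$ then keeps the formula hybrid and preserves the hypotheses, so induction applies. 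This is the rigorous version of the ``outer before middle'' discipline your topological-order idea is reaching for, but it must be run inside $AG(f)$, and the locked case must be split off first. As written, your proof establishes only the cyclic case.
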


\begin{proof}
Let $f$ be a hybrid formula. If $f$ contains a locked formula or a formula $T_i$, then $b_4$ avoids $f$ by \Cref{thm:locked,ti_b4}.
If $f$ contains neither a locked formula nor a formula $T_i$, then we show that $f$ is unavoidable.
By induction and by \cref{redequivunav} it is sufficient to show that $f$ is reducible to a hybrid formula containing neither a locked formula nor a formula $T_i$.
Since $f$ is not locked, $f$ contains a free set of variables and thus $f$ has a free singleton $\acc{X}$. If $f$ contains a fragment $YXY$, then $\acc{Y}$ is also a free singleton of $f$.
Using this argument iteratively, we end up with a free singleton $\acc{Z}$ such that $f$ contains no fragment $TZT$, since $f$ contains no formula $T_i$.

So we can assume that $f$ contains a free singleton $\acc{Z}$ and no fragment $TZT$.
Thus, deleting every occurrence of $Z$ from $f$ gives an hybrid sub-formula containing neither a locked formula nor a formula $T_i$. By induction, $f$ is unavoidable.
\qed
\end{proof}

So the index of an avoidable $xyx$-formula is at most $4$ and we have seen examples of $xyx$-formulas with index $3$ in \Cref{transitive,T_i}.
The next results give an $xyx$-formula with index $4$ and an $xyx$-formula with index $2$ that is not divisible by $AAA$.

\begin{theorem}\label{c5}
$\lambda(ABA.BCB.DCD.DED.AEA)=4$.
\end{theorem}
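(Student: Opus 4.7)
The plan is to establish the two bounds $\lambda(f)\le 4$ and $\lambda(f)\ge 4$ separately for the $xyx$-formula $f = ABA.BCB.DCD.DED.AEA$.

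For the upper bound, I would show that $f$ is a locked formula and then invoke \Cref{thm:locked}. The adjacency graph $AG(f)$ has ten vertices $\{X_L,X_R : X\in\{A,B,C,D,E\}\}$ and ten edges, one for each length-$2$ factor of a fragment, namely $A_LB_R, B_LA_R, B_LC_R, C_LB_R, C_LD_R, D_LC_R, D_LE_R, E_LD_R, A_LE_R, E_LA_R$. A short trace starting from $A_L$ (alternately following $XY$ and $YX$ edges around the underlying $5$-cycle $A$--$B$--$C$--$D$--$E$--$A$) shows that all ten vertices lie in a single connected component of $AG(f)$. Consequently $X_L$ and $X_R$ are in the same component for every variable $X$, so $f$ admits no free singleton and therefore no non-empty free set. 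By \Cref{thm:locked}, $b_4$ avoids $f$, giving $\lambda(f)\le 4$.

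For the lower bound, I would establish $\lambda(f)>3$ by a standard exhaustive backtracking search, analogous to the one used for $\lambda(H)>2$ in the proof of \Cref{T_i}: enumerate the ternary words avoiding $f$ in a depth-first fashion and verify that every branch terminates before some bounded length. Since each fragment is of the form $XYX$, once a candidate prefix $w$ is fixed an occurrence is determined by the five non-empty images $h(A),\ldots,h(E)$, and the requirement that each $h(XYX)$ be a factor of $w$ is highly restrictive, so the search tree stays tractable.

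The principal obstacle lies in this lower bound. In contrast with the proof of \Cref{T_i}, where all directed circuits $\overrightarrow{C_i}$ homomorphically map into a single small digraph $\overrightarrow{D_3}$ associated with an $xyx$-formula $H$ already known to satisfy $\lambda(H)>2$, the digraph $\overrightarrow{G}$ of $f$ is an oriented $5$-cycle with no digon and no loop. No obvious small target $xyx$-formula with known index at least $4$ admits a homomorphism into $\overrightarrow{G}$, so \Cref{homo} does not yield a shortcut, and a direct machine-assisted backtracking over ternary prefixes appears to be the natural route.
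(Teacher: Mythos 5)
Your upper bound is correct and is essentially the paper's: you verify directly that the adjacency graph of $f=ABA.BCB.DCD.DED.AEA$ is a single $10$-cycle (the bipartite double cover of the odd cycle underlying $\overrightarrow{G}$), so $f$ is locked and $b_4$ avoids it by \Cref{thm:locked}; the paper instead cites \Cref{hybrid}, whose proof reduces to exactly this lockedness observation, so the two are interchangeable.

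For the lower bound you and the paper diverge. You correctly observe that \Cref{homo} gives no direct shortcut, since no small target formula of index $\ge 4$ is available, and you fall back on an exhaustive backtracking over $\Sigma_3$. That would in principle succeed (the statement being true, the ternary language avoiding $f$ is finite, so the search terminates), but as written it is an unexecuted computation whose tractability you only assert; with five variables whose images must be guessed, the naive search is considerably heavier than the two-letter backtracking used for $\lambda(H)>2$ in \Cref{T_i}. The paper avoids this entirely with a two-step argument you did not find: first, $f\preceq ABA.BCB.ACA$ via $A\to A$, $B\to B$, $C\to C$, $D\to B$, $E\to C$ (each fragment of $f$ maps into a fragment of $ABA.BCB.ACA$), so any ternary word avoiding $f$ also avoids $ABA.BCB.ACA$, and by \Cref{transitive} any ternary recurrent such word is equivalent to $w_3$, $\sigma(w_3)$, or $\sigma^2(w_3)$; second, $w_3$ contains the explicit occurrence $A\to\texttt{0}$, $B\to\texttt{1}$, $C\to\texttt{02}$, $D\to\texttt{01}$, $E\to\texttt{2}$ of $f$, a contradiction. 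This reduces the whole lower bound to checking one short occurrence in a known word, rather than an open-ended search; it is the key idea missing from your proposal, even though your route could be made to work with enough computation.
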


\begin{proof}
By \Cref{hybrid}, $ABA.BCB.DCD.DED.AEA$ is $4$-avoidable. Notice that $ABA.BCB.DCD.DED.AEA\preceq ABA.BCB.ACA$ via the homomorphism $A\to A$, $B\to B$, $C\to C$, $D\to B$, $E\to C$.
Moreover, $w_3$ contains the occurrence $A\to\texttt{0}$, $B\to\texttt{1}$, $C\to\texttt{02}$, $D\to\texttt{01}$, $E\to\texttt{2}$ of $ABA.BCB.DCD.DED.AEA$. By \Cref{transitive}, the formula is not $3$-avoidable.
\qed
\end{proof}

\begin{theorem}\label{k4}
The fixed point of $\texttt{001}/\texttt{011}$ avoids the $xyx$-formula associated to the directed graph on $4$ vertices with all the $12$ arcs.
\end{theorem}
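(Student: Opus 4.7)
Let $w$ denote the fixed point of $\mu:\texttt{0}\mapsto\texttt{001},\,\texttt{1}\mapsto\texttt{011}$. Since $\mu(x)=\texttt{0}x\texttt{1}$, the identity $w=\mu(w)$ gives the structural rules $w_{3i}=\texttt{0}$, $w_{3i+2}=\texttt{1}$, and $w_{3i+1}=w_i$. The plan is to build on a short inspection of the six possible $3$-letter factors of $w$, which reveals that every factor of $w$ of length at least~$3$ occurs only at positions with a uniquely determined residue modulo~$3$; in particular, for any factor $uvu$ of $w$ with $|u|\ge 3$ the period $|u|+|v|$ must be divisible by~$3$, since otherwise the shift by $|u|+|v|$ would send some position of residue $0$ (with fixed letter $\texttt{0}$) to one of residue $2$ (with fixed letter $\texttt{1}$).

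Suppose for contradiction that $h$ is an occurrence in $w$ of the $xyx$-formula $F$ associated to the complete directed graph on $4$ vertices, with $\sum_X |h(X)|$ minimal. I would first observe that if some variable $X$ has $|h(X)|\le 2$ while two others $Y,Z$ have $|h(Y)|,|h(Z)|\ge 3$, then the fragments $YXY$ and $ZXZ$ give $|h(Y)|\equiv|h(Z)|\equiv-|h(X)|\pmod 3$, whereas $YZY$ forces $|h(Y)|+|h(Z)|\equiv 0\pmod 3$; together these require $|h(X)|\equiv 0\pmod 3$, contradicting $|h(X)|\in\{1,2\}$. Hence either every $|h(X)|\ge 3$, or at least three of the four variables have images of length~$\le 2$. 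The latter base case is handled by direct inspection: the short options are few (pigeonhole among the six binary words of length~$\le 2$, and the avoidance by $w$ of $\texttt{000}$, $\texttt{111}$, $\texttt{010101}$, $\texttt{101010}$ eliminates any two equal short images as well as any length-$2$ image other than $\texttt{01}$), and none of the surviving tuples can satisfy the twelve fragment constraints simultaneously.

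In the main case every $|h(X)|\ge 3$, the constraints $|h(X)|+|h(Y)|\equiv 0\pmod 3$ across the six pairs force each $|h(X)|$ to be divisible by~$3$. Let $r_X\in\{0,1,2\}$ be the unique residue modulo~$3$ at which the factor $h(X)$ occurs in $w$. The middle occurrence of $h(Y)$ in the fragment $XYX$ starts at a position of residue $r_X+|h(X)|\equiv r_X\pmod 3$, so $r_Y=r_X$, and all four residues coincide at some value $r^*$. Now define $h'(X)$ as the word obtained from $h(X)$ by keeping only the letters at indices $j$ with $j\equiv 1-r^*\pmod 3$; it has length $|h(X)|/3$. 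The identity $w_{3i+1}=w_i$ says that the subsequence of residue-$1$ positions of $w$ is itself $w$, so for each fragment $XYX$ the subsequence of residue-$1$ letters extracted from the factor $h(X)h(Y)h(X)$ of $w$ is exactly $h'(X)h'(Y)h'(X)$ and is again a factor of $w$. Thus $h'$ is an occurrence of $F$ in $w$ with strictly smaller total length, contradicting the minimality of $h$. The main obstacle is the finite but slightly intricate verification of the base case where three or four variables have images of length at most~$2$ and the twelve fragment constraints must be juggled simultaneously.
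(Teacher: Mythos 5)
Your proof is correct in substance, but it takes a genuinely different route from the paper: the paper's entire proof is a computer verification via Cassaigne's algorithm, whereas you give a self-contained combinatorial argument. Your key observations all check out: the fixed point $w$ of $\texttt{0}\mapsto\texttt{001}$, $\texttt{1}\mapsto\texttt{011}$ satisfies $w_{3i}=\texttt{0}$, $w_{3i+2}=\texttt{1}$, $w_{3i+1}=w_i$; the six length-$3$ factors $\texttt{001},\texttt{011},\texttt{010},\texttt{110},\texttt{100},\texttt{101}$ each occur at a unique residue mod $3$, so every $uvu$ with $|u|\ge3$ has period divisible by $3$; in the all-long case the twelve fragments force every $|h(X)|\equiv0\pmod 3$ and a common residue $r^*$, and extracting the residue-$1$ positions yields a strictly smaller occurrence, so the minimal-counterexample induction closes. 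Your reduction to the base case (one short image cannot coexist with two long ones) is also sound. The one inaccuracy is in your base-case sketch: avoidance of $\texttt{000}$, $\texttt{111}$, $\texttt{010101}$, $\texttt{101010}$ does \emph{not} eliminate every length-$2$ image other than $\texttt{01}$ --- for instance $\texttt{10}$ survives all pairwise checks against $\texttt{0}$ and $\texttt{1}$ (both $\texttt{0100}$, $\texttt{10010}$, $\texttt{1101}$, $\texttt{10110}$ are factors), and the pairs $\{\texttt{1},\texttt{00}\}$ and $\{\texttt{0},\texttt{11}\}$ are also pairwise compatible. A complete pairwise analysis leaves exactly the triples $\{\texttt{0},\texttt{1},\texttt{01}\}$ and $\{\texttt{0},\texttt{1},\texttt{10}\}$ of admissible short images; no fourth short image can be added (the compatibility graph has no $4$-clique), and a long fourth variable $W$ is killed by your own residue lemma, since $W\!AW$ with $|h(A)|=1$ forces $|h(W)|\equiv2\pmod3$ while the length-$2$ variable forces $|h(W)|\equiv1\pmod3$. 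So the base case does close, but it needs this extra argument rather than the purely finite inspection you describe. What your approach buys over the paper's is an explicit, human-checkable proof and an explanation of \emph{why} this particular word works (the $3$-synchronization and self-similarity under $w_{3i+1}=w_i$); what the paper's approach buys is brevity and uniformity with the other verifications in the article.
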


\begin{proof}
We use again Cassaigne's algorithm.
\qed
\end{proof}

%

\section{Palindrome patterns}\label{sec:palin}
Mikhailova~\cite{M13} has considered the index of an avoidable pattern that is a palindrome and proved that it is at most $16$.
She actually constructed a morphic word over $\Sigma_{16}$ that avoids every avoidable palindrome pattern.

We make a distinction between the largest index $\mathcal{P}_w$ of an avoidable palindrome pattern and the smallest alphabet size $\mathcal{P}_s$ allowing an infinite word avoiding every avoidable palindrome pattern.
We obtained~\cite{OchemRosenfeld2017} the lower bound $\lambda(ABCADACBA)=\lambda(ABCA.ACBA)=4$, so that $4\le\mathcal{P}_w\le\mathcal{P}_s\le16$.

The following result is a slight improvement to $\lambda(ABCA.ACBA)=4$ that is not related to palindromes.
\begin{theorem}\label{improvement}
$\lambda(ABCA.ACBA.ABCBA)=4$.
\end{theorem}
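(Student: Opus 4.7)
The upper bound $\lambda(ABCA.ACBA.ABCBA)\le 4$ is essentially free: taking the identity morphism witnesses $ABCA.ACBA\preceq ABCA.ACBA.ABCBA$, since the fragments $ABCA$ and $ACBA$ are literally fragments of the larger formula. Combined with the prior equality $\lambda(ABCA.ACBA)=4$ stated at the beginning of \Cref{sec:palin}, this gives the upper bound for free. All the substance of the theorem therefore lies in proving the matching lower bound, namely that no infinite ternary word avoids $ABCA.ACBA.ABCBA$. As usual I may and will assume this infinite ternary word $w$ to be recurrent.

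The first step is to reduce to the square-free case. Observe that every fragment of $ABCA.ACBA.ABCBA$ is already a fragment of the formula $ABA.ACA.ABCA.ACBA.ABCBA$, so our formula is divisible by $ABA.ACA.ABCA.ACBA.ABCBA$, which divides $AA$ by \Cref{sqf}. Hence any recurrent word that avoids $ABCA.ACBA.ABCBA$ is automatically square-free, and the problem reduces to showing that every infinite ternary square-free word contains an occurrence of $ABCA.ACBA.ABCBA$.

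The plan is to force a ``unit-length'' occurrence, namely a morphism $h$ with $|h(A)|=|h(B)|=|h(C)|=1$, which amounts to finding in $w$ three factors $abca$, $acba$, $abcba$ for some triple $(a,b,c)$ of letters (necessarily distinct, since a repeated image among unit letters would create a square). So the target claim is: every infinite ternary square-free word contains, for some permutation $(a,b,c)$ of $\Sigma_3$, each of the three factors $abca$, $acba$, $abcba$. I would establish this claim by a backtracking argument in the spirit of \Cref{transitive} and \Cref{thm:b3}: enumerate ternary square-free words that avoid $ABCA.ACBA.ABCBA$, and verify that no such word exceeds an explicit bounded length. An alternative is to invoke \Cref{w_3}, \Cref{thm:b3}, and \Cref{transitive} to conclude that any sufficiently generic recurrent square-free ternary word must be equivalent (up to the action of $\sigma$) to one of $b_3$, $v_3$, $w_3$, and then to check directly in each case that the three factors $0120$, $0210$, $01210$ (or the corresponding permutations) appear; a short inspection of the morphic fixed points shows that they do.

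The main obstacle is keeping the case analysis bounded. Intuitively, occurrences with longer images $|h(X)|\ge 2$ cannot rescue a word in which the unit-image occurrence fails, because the square-freeness constraint is rigid enough to prevent the word from developing a rich factor structure while simultaneously refusing all three of $abca$, $acba$, $abcba$ for every $(a,b,c)$. Making that intuition precise is what the backtracking finitization accomplishes, and this finite check — either direct tree search among ternary square-free words or reduction to a verification inside the three morphic candidates of \Cref{w_3} — is the critical step of the proof.
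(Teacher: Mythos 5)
Your frame is right and matches the paper: the upper bound does come for free from $ABCA.ACBA\preceq ABCA.ACBA.ABCBA$ and $\lambda(ABCA.ACBA)=4$, and the reduction to square-free words via \Cref{sqf} is exactly the paper's first step. The gap is in the finitization of the lower bound. The paper's computer check is over all occurrences $h$ with $|h(A)|=1$, $|h(B)|\le 2$, $|h(C)|\le 3$, not just unit-length images, and this is not an unoptimized choice: your claim that every infinite ternary square-free word contains a unit-length occurrence (i.e.\ all three of $abca$, $acba$, $abcba$ for some permutation $(a,b,c)$ of $\Sigma_3$) fails. Concretely, for every permutation $(a,b,c)$ one of $abca$, $acba$ belongs to the set $\{\texttt{0120},\texttt{1201},\texttt{2012}\}$, so any square-free ternary word avoiding these three length-$4$ factors contains no unit occurrence at all; and such square-free words are not bounded in length (e.g.\ \texttt{0121021202101210} is square-free and avoids all three, and the backtracking on this language does not terminate). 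So the tree search you propose would run forever, and you genuinely need to admit longer images of $B$ and $C$ before the check closes.

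Your fallback route is also not salvageable as stated: \Cref{w_3}, \Cref{thm:b3} and \Cref{transitive} characterize ternary square-free recurrent words only under \emph{additional} avoidance hypotheses (avoiding \texttt{010} and \texttt{212}, avoiding a specific formula $f$, etc.). A general ternary square-free recurrent word need not be equivalent to any of $b_3$, $v_3$, $w_3$ — there are exponentially many ternary square-free words, so ``sufficiently generic'' cannot be made to mean ``one of these three up to $\sigma$''. Checking the three factors inside $b_3$, $v_3$, $w_3$ therefore proves nothing about an arbitrary square-free word avoiding the formula. The correct statement to verify by computer is the paper's: no infinite ternary square-free word avoids all occurrences of $ABCA.ACBA.ABCBA$ with $|h(A)|=1$, $|h(B)|\le2$, $|h(C)|\le3$.
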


\begin{proof}
By Lemma~\ref{sqf}, every recurrent word avoiding $ABCA.ACBA.ABCBA$ is square-free.
A computer check shows that no infinite ternary square-free word avoids the occurrences $h$ of $ABCA.ACBA.ABCBA$ such that $|h(A)|=1$, $|h(B)|\le 2$, and $|h(C)|\le3$.
\qed
\end{proof}

Let us give necessary conditions on a palindrome pattern $P$ so that $5\le\lambda(P)\le16$.

\begin{enumerate}
 \item The length of $P$ is odd and the central variable of $P$ is isolated. Indeed, otherwise $P$ would be a doubled pattern and thus $3$-avoidable~\cite{O16}.
 \item No variable of $P$ appears both at an even and an odd position. Indeed, if $P$ had a variable that appears both at an even and an odd position,
  then $P$ would be divisible by a formula in the family $AA$, $ABCA.ACBA$, $ABCDEA.AEDCBA$, $ABCDEFGA.AGFEDCBA$, \dots
  Such formulas (with an odd number of variables) are locked and thus are avoided by $b_4$ by \Cref{thm:locked}. So $P$ would be $4$-avoidable.
\end{enumerate}

We have found three patterns/formulas satisfying these conditions (see \Cref{palin}), but they seem to be 2-avoidable.
We use again Cassaigne's algorithm with simple pure morphic words to ensure that they are 4-avoidable. Let $z_3$ be the fixed point of $\texttt{01}/\texttt{2}/\texttt{20}$.

\begin{theorem}\label{palin}{\ }
\begin{enumerate}
 \item $ADBDCDAD.DADCDBDA$ is avoided by $b_4$.
 \item $ABCDADC.CDADCBA$ is avoided by $z_3$.
 \item $ABACDBAC.CABDCABA$ is avoided by $z_3$ and $b_4$. 
\end{enumerate}
\end{theorem}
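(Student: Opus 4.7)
The plan is to invoke Cassaigne's algorithm \cite{cassaignealgo} in each of the three cases, exactly as in the earlier applications in \Cref{transitive,thm:b3,tm2,k4}. Given a morphism $m$ and a formula $f$, the algorithm attempts to decide, by a finite computation, whether the pure morphic word $m^\omega(\texttt{0})$ contains an occurrence of $f$.

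Operationally, we assume for contradiction that such an occurrence $h$ exists and take one minimizing $\sum_X |h(X)|$. The algorithm enumerates the finitely many ``templates'' describing how each $h(X)$ can align with the $m$-substitution boundaries inside $m^\omega(\texttt{0})$; for every template it either derives a contradiction from the fragment constraints, or produces a strictly shorter occurrence by $m$-desubstitution, contradicting minimality.

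For statement (1) we run the algorithm on $b_4$ (the fixed point of the $2$-uniform morphism $\texttt{01}/\texttt{03}/\texttt{21}/\texttt{23}$) and the formula $ADBDCDAD.DADCDBDA$. Since in $b_4$ every letter appears only at positions whose parity equals that of the letter, the repeated occurrences of $A$ and $D$ inside the fragment $ADBDCDAD$ immediately force $|h(A)|$, $|h(B)|$, $|h(C)|$, and $|h(D)|$ to all be even, exactly as in the proof of \Cref{ti_b4}; each $h(X)$ can then be $m$-desubstituted, which powers the minimality-based contradiction. For statement (2) we run the algorithm on $z_3$, the fixed point of the non-uniform morphism $\texttt{01}/\texttt{2}/\texttt{20}$, against $ABCDADC.CDADCBA$. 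Statement (3) follows from running the algorithm twice, once for $z_3$ and once for $b_4$, against $ABACDBAC.CABDCABA$. The non-uniformity of the $z_3$ morphism requires more careful bookkeeping of template positions within $m$-images but the template enumeration remains finite.

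The main obstacle is computational rather than conceptual: in each case we must verify that the template enumeration terminates with no realizable template, i.e., that Cassaigne's algorithm actually returns ``avoids''. For formulas of this size (four or five variables and fragments of length up to $8$) the search tree is large enough to be tedious by hand but well within the reach of a standard implementation of the algorithm, which is why the theorem is proved by appeal to the algorithm rather than by an explicit case analysis.
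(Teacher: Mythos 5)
Your proposal takes essentially the same approach as the paper: the paper establishes \Cref{palin} simply by appealing to Cassaigne's algorithm~\cite{cassaignealgo} applied to each formula against the stated pure morphic words $b_4$ and $z_3$, exactly as you describe. The additional detail you supply about template enumeration, desubstitution, and the parity argument for $b_4$ is a correct account of how the computation works, but the paper leaves all of that implicit in the citation of the algorithm.
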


\section{Discussion}
Let us briefly mention the things that we have attempted to do in this paper, without success.
\begin{itemize}
 \item Improve the bound in \Cref{nice}.
 \item Improve \Cref{k4} by showing that some $xyx$-formula on $4$ variables and fewer fragments is $2$-avoidable.
 \item Show that the $xyx$-formula associated to the transitive tournament on $5$ vertices is $2$-avoidable.
\end{itemize}

\end{document}